\newtheorem*{introtheorem}{Theorem}
\newtheorem{theorem}{Theorem}[section]
\newtheorem{lemma}[theorem]{Lemma}
\newtheorem{proposition}[theorem]{Proposition}
\newtheorem{corollary}[theorem]{Corollary}
\theoremstyle{definition}
\newtheorem{definition}[theorem]{Definition}
\newtheorem{example}[theorem]{Example}
\newtheorem{remark}[theorem]{Remark}
\newtheorem{construction}[theorem]{Construction}
\newtheorem{algorithm}[theorem]{Algorithm}
\newtheorem*{introalgo}{Algorithm}
\theoremstyle{remark}
\def\KK{\mathbb{K}}
\def\ZZ{\mathbb{Z}}
\def\PP{\mathbb{P}}
\def\QQ{\mathbb{Q}}
\def\OO{\mathcal O}
\def\diag{{\rm diag}}
\def\<{\langle}
\def\>{\rangle}
\def\cox{\mathcal{R}}
\newcommand{\aut}{{\rm Aut}}
\newcommand{\caut}{{\rm CAut}}
\newcommand{\thickhline}{%
    \noalign {\ifnum 0=`}\fi \hrule height 1pt
    \futurelet \reserved@a \@xhline
}
\renewcommand{\phi}{\varphi}
\newcommand{\myhline}{\noalign{\global\arrayrulewidth.95pt}\hline
                      \noalign{\global\arrayrulewidth.2pt}}
\def\Chi{{\mathbb X}}
\def\quot{/\!\!/}
\def\mal{\! \cdot \!}
\def\rq#1{\widehat{#1}}
\def\b#1{\overline{#1}}
\def\KK{{\mathbb K}}
\def\ZZ{{\mathbb Z}}
\def\QQ{{\mathbb Q}}
\def\PP{{\mathbb P}}
\def\Cox{\cox}
\def\id{{\rm id}}
\def\Mov{{\rm Mov}}
\def\Aut{\operatorname{Aut}}
\def\Bir{\operatorname{Bir}}
\def\Cl{\operatorname{Cl}}
\def\GL{{\rm GL}}
\def\Spec{{\rm Spec}}
\def\Aut{{\rm Aut}}
\def\CAut{{\rm CAut}}
\def\spec{\Spec}
\def\orig{\Omega}
\def\Orig{\orig}
\def\KT#1{\KK[T_1,\ldots,T_{#1}]}
\def\Stab{{\rm Stab}} 
\def\Cent{{\rm Cent}} 
\def\stab{\Stab} 
\def\BBB{{\mathcal B}}
\newcommand{\vast}{\bBigg@{4}}
\newcommand{\Vast}{\bBigg@{5}}
  \newcommand{\miniscule}{\@setfontsize\miniscule{4}{5}}
\author[J.~Hausen, S.~Keicher and R.~Wolf]{J\"urgen~Hausen, Simon~Keicher and R\"udiger Wolf}
 \address{Mathematisches Institut, Universit\"at T\"ubingen,
Auf der Morgenstelle 10, 72076 T\"ubingen, Germany}
\email{juergen.hausen@uni-tuebingen.de}
\address{Departamento de Matematica\\
Facultad de Ciencias Fisicas y Matematicas\\
Universidad de Concepci{\'o}n\\ Casilla 160-C, Concepci{\'o}n, Chile}
\email{keicher@mail.mathematik.uni-tuebingen.de}
\thanks{The second author was supported by proyecto FONDECYT postdoctorado N.~3160016.}
\address{Mathematisches Institut, Universit\"at T\"ubingen,
Auf der Morgenstelle 10, 72076 T\"ubingen, Germany}
\email{}
\title[Computing automorphisms]{Computing automorphisms of Mori dream spaces}
\subjclass[2010]{14L30, 13A50, 14J50, 14Q15}
\begin{document}

\begin{abstract}
We present an algorithm to compute the automorphism
group of a Mori dream space.
As an example calculation, we determine the 
automorphism groups of singular cubic surfaces 
with general parameters.
The strategy is to study graded automorphisms of 
an affine algebra graded by a finitely generated
abelian group and apply the results to the Cox ring.
Besides the application to Mori dream spaces, our 
results could be used for symmetry based computing,
e.g. for Gr\"obner bases or tropical varieties.
\end{abstract}

\maketitle

\section{Introduction}

We are interested in automorphism groups of
Mori dream spaces. Recall that the latter are 
normal projective varieties~$X$ with
finitely generated divisor class group
$\Cl(X)$ and finitely generated Cox ring
$$
\mathcal{R}(X)
\ = \
\bigoplus_{[D] \in \Cl(X)} \Gamma(X,\mathcal{O}(D)).
$$
The Cox ring together with an ample class
completely encodes a Mori dream space:
$X$ can be reconstructed as the GIT quotient
of $\Spec \, \mathcal{R}(X)$ associated
to the ample class in $\Cl(X)$, 
see~\cite{HuKe,ArDeHaLa}.

The basic idea is to tackle the automorphism
group of $X$ via graded automorphisms of 
its Cox ring. 
This approach was used in~\cite{Co} in
the case of complete toric varieties $X$,
where root subgroups, dimension 
and number of connected components of 
$\Aut(X)$ can be described.
In~\cite{ArHaHeLi}, the more general case
of complete rational varieties with a
torus action of complexity one was
considered,
where a description of root subgroups is 
still possible, but general satisfying 
statements on the dimension or the number 
of connected components appear to be 
difficult.
The aim is of this paper is to provide
algorithmic tools for the study automorphism
groups for arbitrary Mori dream spaces.
The main result in this regard is the
following, see Algorithm~\ref{algo:autx}:

\begin{introalgo}
\emph{Input:} the Cox ring of $X$
in terms of homogeneous generators
and relations and an ample class
in $\Cl(X)$.
\emph{Output:} a presentation of the 
Hopf algebra of $\Aut(X)$ in terms 
of generators and relations.
\end{introalgo}

This allows in particular to compute 
the dimension and the number of 
connected components of $\Aut(X)$.
We also apply the algorithm 
to detect or exclude symmetries.
In Example~\ref{ex:detectcplx1} we 
discuss the blow-up of the projective 
space in special configuration of 
six points.
As another sample computation, we 
continue the investigation of automorphism 
groups of cubic surfaces started in~\cite{Sa}
by now entering the case with parameters,
see Theorem~\ref{thm:cubic}:

\goodbreak

\begin{introtheorem}
Let $X \subseteq \PP^3$ be a
singular cubic surface
with at most ADE singularities
and parameters in its defining
equations.
Depending on the ADE singularity
type $S(X)$,
the automorphism group $\aut(X)$
for a general choice of parameters
is the following.

\goodbreak

\begingroup
\footnotesize
\begin{longtable}{cccccccc}
\myhline
$S(X)$ & $A_3$ & $A_2A_1$ & $2A_2$  & $3A_1$ & $A_2$ & $2A_1$ & $A_1$  \\
\hline 
$\Aut(X)$ & $\ZZ/2\ZZ$ & $\{1\}$ & $\KK^*\rtimes \ZZ/2\ZZ$ & $S_3$ &
$\{1\}$ & $\ZZ/2\ZZ$ & $\{1\}$\\
\myhline
\end{longtable}
\endgroup
\end{introtheorem}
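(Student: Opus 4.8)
The plan is to treat each singularity type separately and to reduce the computation of $\aut(X)$ to the algorithm for graded automorphisms of the Cox ring. First I would establish that every cubic surface $X$ of the listed ADE types is a Mori dream space and write down its Cox ring $\cox(X)$ explicitly as a $\Cl(X)$-graded algebra, given by homogeneous generators and relations; here the parameters of the defining cubic reappear as parameters in the relations of $\cox(X)$. Such presentations are available from the theory of Cox rings of (singular) del Pezzo surfaces, and for each of the seven types they can be read off or computed directly. Together with an ample class, this provides exactly the input required by Algorithm~\ref{algo:autx}.

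The second step is to recall the passage from graded automorphisms back to $\aut(X)$. Writing $H = \Spec \KK[\Cl(X)]$ for the characteristic quasitorus and $\hat X \subseteq \Spec \cox(X)$ for the characteristic space, we have $X = \hat X \quot H$, and every automorphism of $X$ lifts to an $H$-equivariant, hence $\Cl(X)$-graded, automorphism of $\cox(X)$. Thus $\aut(X)$ is recovered as the quotient $\CAut(\cox(X))/H$ of the group of graded automorphisms by the subgroup induced by the grading. Feeding the Cox ring into the algorithm yields a Hopf algebra presentation of $\CAut(\cox(X))$, from which the dimension, the component group, and finally the isomorphism type of $\CAut(\cox(X))/H$ can be extracted. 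Carried out for each type, this is what produces the entries of the table; the one positive-dimensional case, $2A_2$, is the one where the graded automorphisms contain an extra one-parameter subgroup surviving the quotient, giving the factor $\KK^*$.

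The delicate point, and the main obstacle, is the dependence on parameters. The algorithm computes the automorphism group of one fixed Cox ring, whereas the theorem asserts a statement for a general member of each family. I would therefore run the computation with the parameters kept as indeterminates, obtaining defining equations for $\CAut(\cox(X))$ whose solutions depend on the parameters, and then isolate the generic stratum: one has to verify that the parameter conditions which would enlarge the automorphism group cut out a proper closed subset of the parameter space, so that the groups in the table are indeed the generic ones. Controlling this genericity simultaneously with producing correct Cox ring presentations for the more degenerate types (such as $A_3$ and $2A_2$) is where the real work lies; once the generic stratum is identified, each individual group is then pinned down by the routine, if lengthy, symbolic computation delivered by the algorithm.
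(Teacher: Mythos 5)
Your overall strategy --- take the known presentations of $\Cox(X)$ with parameters from~\cite{DeHaHeKeLa,De,HaKeLa}, feed them together with an ample class into the graded-automorphism machinery, quotient by the characteristic quasitorus $H$, and then argue genericity in the parameters --- is the same as the paper's. However, the group-theoretic reduction you state is wrong at precisely the point that carries the content of the theorem. In the paper's notation, $\CAut_K(R)\subseteq\Aut_K(R)$ is the subgroup of graded automorphisms $(\phi,\psi)$ with $\psi=\id$, and this subgroup controls only the identity component: by~\cite[Cor.~2.8]{ArHaHeLi} one has $\CAut_H(\b X)^0/H\cong\Aut(X)^0$. The finite entries of the table ($\ZZ/2\ZZ$, $S_3$, and the $\ZZ/2\ZZ$-factor in the $2A_2$ case) come entirely from automorphisms acting \emph{nontrivially} on $\Cl(X)$, i.e.\ from the component group $\Gamma=\Aut_K(R)/\CAut_K(R)$ of Proposition~\ref{prop:exseq}. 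In fact, in six of the seven cases one verifies $\CAut_K(R)=H$, so your proposed quotient $\CAut(\cox(X))/H$ would be \emph{trivial} in all of these cases, contradicting four of the seven table entries. The correct identification is $\Aut(X)\cong\Aut_H(\widehat X)/H$, where $\Aut_H(\widehat X)\subseteq\Aut_K(R)$ is the stabilizer of the semistable locus of the ample class; for these surfaces this stabilizer is all of $\Aut_K(R)$, since the moving cone contains a single GIT-chamber. If your ``$\CAut$'' was merely loose notation for the full group $\Aut_K(R)$ (your phrase ``the group of graded automorphisms'' and your appeal to Algorithm~\ref{algo:autx}, which computes exactly $\Aut_H(\widehat X)/H$, suggest this), then the plan is operationally sound; but as written the stated isomorphism is false and would fail exactly where the theorem has content.

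For comparison, the paper does not read the groups off a single Hopf-algebra presentation per case. For the types $A_3$, $A_2A_1$, $3A_1$, $A_2$, $2A_1$, $A_1$ it verifies $H=\CAut_K(R)$ by comparing the describing ideal from Algorithm~\ref{algo:quotrep} with the ideal of $H$ from Remark~\ref{rem:Hideal}; this yields $\Aut(X)\cong\Aut_K(R)/H=\Gamma$, a finite group computed via Remark~\ref{rem:cautgamma}. For $2A_2$ it constructs explicit homomorphisms producing a split exact sequence
\[
1\ \to\ \KK^*\times H\ \to\ \Aut_K(R)\ \to\ \ZZ/2\ZZ\ \to\ 1,
\]
whence $\Aut(X)\cong\Aut_K(R)/H\cong\KK^*\rtimes\ZZ/2\ZZ$. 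Your treatment of the parameters does match the paper's: the algorithms are run with the parameters kept symbolic (e.g.\ a parameter $a\in\KK^*\setminus\{1\}$ appears in the stabilizer ideal in the $2A_2$ case), and special values can enlarge the group, as the paper notes in the remark following Theorem~\ref{thm:cubic}.
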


\goodbreak

As a theoretical byproduct of our 
considerations, we observe a bound on the 
dimension of the automorphism group
of a Mori dream space, see
Proposition~\ref{prop:autbound}.
Another result is an extension statement on  
automorphisms of Mori dream spaces, 
which briefly explain here. 
Any choice of $\Cl(X)$-prime
homogeneous generators for the 
Cox ring of a given Mori dream 
space $X$ leads to an
embedding $X \subseteq Z$ into 
a toric variety $Z$ such that
the embedded $X$ inherits many 
geometric properties of the ambient $Z$;
see~\cite{ArDeHaLa}.
Our statement concerns the unit component 
$\Aut(X)^0$ of the automorphism group 
$\Aut(X)$ and the group $\Bir_2(X)$
of birational automorphisms defined up
to codimension two; see 
Theorem~\ref{thm:bir2} for the precise 
formulation.

\begin{introtheorem}
Let $X \subseteq Z$ be the toric 
embedding of a Mori dream space 
arising from the choice of pairwise
$\Cl(X)$-prime homogeneous 
generators.
Then $\Aut(X)^0$ and~$\Bir_2(X)$
are induced by the stabilizer subgroups
of $X$ in $\Aut(Z)^0$ and $\Bir_2(Z)$ 
respectively.
\end{introtheorem}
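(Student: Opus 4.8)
The plan is to recast both assertions as statements about $\Cl(X)$-graded algebra automorphisms and then to exploit the rigidity forced by the $\Cl(X)$-primeness of the chosen generators. Write $\rq X = \Spec \cox(X)$ for the total coordinate space, carrying the action of the characteristic quasitorus $H = \Spec \KK[\Cl(X)]$, and recall that the pairwise non-associated $\Cl(X)$-prime generators $f_1,\ldots,f_r$, of degrees $w_i = \deg f_i$, define a surjection $\KK[T_1,\ldots,T_r] \twoheadrightarrow \cox(X)$, $T_i \mapsto f_i$, with kernel the relation ideal $I$. Geometrically this is the $H$-equivariant closed embedding $\rq X \hookrightarrow \rq Z = \KK^r$ descending to $X \subseteq Z$, and since both sides share the grading group and the quasitorus $H$, the essential observation is that an automorphism of $Z$ stabilizes the subvariety $X$ if and only if the corresponding graded automorphism of $\KK[T_1,\ldots,T_r]$ preserves $I$, i.e.\ descends to $\cox(X)$.

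For the unit component I would first use that a connected group acts trivially on the discrete group $\Cl(X)$; hence every element of $\Aut(X)^0$ lifts to a degree-preserving graded automorphism of $\cox(X)$, and conversely every such automorphism descends to $X$ via the GIT construction. The analogous description, applied to the polynomial Cox ring, identifies $\Aut(Z)^0$ (modulo $H$) with the degree-preserving graded automorphisms of $\KK[T_1,\ldots,T_r]$. The core is then an extension step: given a degree-preserving graded automorphism $\varphi$ of $\cox(X)$, pick for each $i$ a polynomial $g_i$ representing $\varphi(f_i)$ and set $\Phi(T_i) = g_i$. As $\varphi$ respects degrees, $\Phi$ is a graded endomorphism of $\KK[T_1,\ldots,T_r]$ carrying $I$ into $I$ and reducing to $\varphi$ modulo $I$. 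Once $\Phi$ is seen to be invertible it lies in $\Aut(Z)^0$ and, preserving $I$, stabilizes $X$, so restriction yields the desired surjection $\Stab_{\Aut(Z)^0}(X) \to \Aut(X)^0$.

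The statement for $\Bir_2$ follows the same scheme with the ample, i.e.\ GIT, constraint removed. A birational automorphism defined up to codimension two acts on the entire graded algebra $\cox(X)$ --- possibly permuting the grading through the induced action on $\Cl(X)$ --- and this gives a bijection between $\Bir_2(X)$ and the full group of graded automorphisms of $\cox(X)$, with the parallel statement for $\Bir_2(Z)$ and $\KK[T_1,\ldots,T_r]$. The extension step of the previous paragraph applies unchanged, and the same identification of ``stabilizes $X$'' with ``preserves $I$'' closes the argument.

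The step I expect to be the main obstacle is the invertibility of the lift $\Phi$: a priori it is only a graded endomorphism preserving $I$. This is precisely where the hypothesis that the $f_i$ are pairwise non-associated $\Cl(X)$-prime generators is indispensable, since it forces $\varphi$ to send each $\Cl(X)$-prime generator to a $\Cl(X)$-prime element of the same degree, thereby constraining the possible images $g_i$ of the $T_i$ and excluding non-invertible liftings. A secondary point is to verify that the image of the restriction map is exactly the unit component $\Aut(X)^0$, respectively all of $\Bir_2(X)$, rather than a proper or larger subgroup; this is handled by the explicit description of $\Aut(Z)^0$ and $\Bir_2(Z)$ as graded automorphisms of the polynomial ring modulo $H$, which is available for toric varieties.
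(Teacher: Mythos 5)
Your overall strategy---recasting both statements in terms of graded automorphisms of the Cox ring and of its polynomial presentation, and reducing everything to a lifting problem from $\Aut_K(R)$, $R=\cox(X)$, to the polynomial ring $S=\KT{r}$---is exactly the paper's route (Proposition~\ref{prop:stab} combined, via Remark~\ref{rem:X2R}, with the exact sequences of~\cite{ArHaHeLi}). You also correctly isolate the crux: a lift $\Phi$ of $\varphi\in\Aut_K(R)$ obtained by choosing representatives $g_i$ of $\varphi(f_i)$ is a priori only a graded endomorphism of $S$ with $\Phi(I)\subseteq I$, and the theorem hinges on producing an \emph{invertible} lift.

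However, your proposed resolution of that crux is a genuine gap. You attribute invertibility to the $\Cl(X)$-primeness of the generators, arguing that $\varphi$ sends prime generators to prime elements of the same degree and that this ``excludes non-invertible liftings.'' Knowing that each $g_i$ maps onto a $K$-prime element of $R$ does not make the endomorphism $T_i\mapsto g_i$ of $S$ an automorphism, and no argument is given. In fact primeness alone does not suffice: take $R=\KK[x,y]$ with the pairwise non-associated prime generators $x,\,y,\,x+y$, so that $S=\KT{3}$ and $I=\langle T_3-T_1-T_2\rangle$; then $T_1\mapsto T_1$, $T_2\mapsto T_2$, $T_3\mapsto T_1+T_2$ is a graded lift of the identity of $R$ which is not invertible. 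What rules this out is not primeness but \emph{minimality} of the presentation, $I\subseteq\langle T_1,\ldots,T_r\rangle^2$; the theorem's hypothesis is a minimal system of prime generators, where primeness only serves to set up the toric embedding with $\Cl(Z)=\Cl(X)$ and plays no role in the lifting. The paper's Lemma~\ref{lem:minpres} settles invertibility as follows: lift in both directions (Lemma~\ref{lem:lift}), so the composite $\tau$ satisfies $\pi\circ\tau=\pi$ and hence $T_i-\tau(T_i)\in I\subseteq\langle T_1,\ldots,T_r\rangle^2$; then, using that the grading is pointed, prove $\tau(S_w)=S_w$ by induction on the maximal length of decompositions of $w$ into indecomposable generator degrees, and conclude bijectivity from finite-dimensionality of the graded pieces. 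Without this (or an equivalent) argument your proof does not close. Two smaller points: $\Bir_2(X)$ is not in bijection with $\Aut_K(R)$ but is its quotient by the characteristic quasitorus $H$ (by~\cite[Thm.~2.1]{ArHaHeLi}), which is also why the paper's precise statement involves quotients by the subgroups acting trivially on $X$; and for the $\Bir_2$ statement one must first check that the stabilizer of $X$ in $\Bir_2(Z)$ is even well defined---part~(i) of Theorem~\ref{thm:bir2}---which your purely algebraic formulation silently bypasses.
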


As indicated, our approach goes via
graded automorphisms of algebras
$R$ graded by finitely generated
abelian groups~$K$.
Section~\ref{sec:symalg} provides
a theoretical study of this setting
and Section~\ref{sec:algos} presents
the first algorithmic results:
its core is Algorithm~\ref{algo:quotrep},
which represents 
the graded automorphism group
explicitly
as a closed subgroup of a general linear
group.
The final Algorithm~\ref{algo:autx}
computing $\Aut(X)$ first determines
the graded automorphisms using
Algorithm~\ref{algo:quotrep}, next
computes the stabilizer of the
set of semistable points associated
with an ample class of~$X$
using Algorithm~\ref{algo:autwidehatX}
and finally arrives via an invariant
ring calculation at~$\Aut(X)$.

Note that besides our application to
Mori dream spaces, our methods can be
used to obtain  symmetries of
homogeneous ideals.
This supports symmetry based algorithms
such as the Gr\"obner basis
computations~\cite{faugere,steidel}
or the computation of tropical varieties~\cite{gfan}.
We discuss this effect in
Example~\ref{ex:groebnerfan}.
Our algorithms will be made available soon
in a suitable software package.

We want to thank Johannes Hofscheier for
his interest in our considerations and
helpful discussions.
Moreover, we are grateful to the referee
for carefully reading the text and 
for his very valuable comments.

\tableofcontents


\section{Automorphisms of graded algebras}
\label{sec:symalg}

We study algebras graded by finitely generated 
abelian groups. 
The aim is to make the graded automorphism 
group accessible for computations in the 
case of an effective, pointed grading 
as introduced below. 
The precise statements are given in 
Propositions~\ref{prop:stabpres} and
\ref{prop:autfinitepres}.

Let us fix the notation and recall
basic background.
Our ground field $\KK$ is algebraically closed 
and of characteristic zero.
A $\KK$-algebra $R$ is \emph{graded} by 
an abelian group $K$ if it comes 
with a direct sum decomposition 
into vector subspaces 
$$ 
R \ = \ \oplus_K R_w
\quad\text{such that}\quad
R_wR_{w'} \ \subseteq \ R_{w+w'}
\quad\text{for all} \quad w,w' \in K.
$$
A \emph{graded homomorphism} of 
two such algebras $R = \oplus_K R_w$ 
and $S = \oplus_L S_u$ is 
a pair $(\varphi,\psi)$, where  
$\varphi \colon R \to S$ is an 
$\KK$-algebra homomorphism and 
$\psi \colon K \to L$
a homomorphism of the grading groups 
such that $\varphi(R_w) \subseteq S_{\psi(w)}$ 
holds for all~$w \in K$.
We denote by $\aut_K(R)$ the group of graded 
automorphisms of~$R$.

If $R = \oplus_K R_w$ is integral, then 
the \emph{weight monoid} is the submonoid
$\omega(R) \subseteq K$ of all degrees 
$w \in K$ with $R_w \ne 0$.
We say that the $K$-grading of $R$ is 
\emph{effective} if~$\omega(R)$ 
generates $K$ as a group.
Moreover, we call the $K$-grading 
\emph{pointed} if $R_0= \KK$ holds 
and the convex cone in 
$K_\QQ = K \otimes_\ZZ \QQ$ 
generated by $\omega(R)$ contains 
no line.

Let $R = \oplus_K R_w$ be integral and  
the $K$-grading  pointed. 
Then we have a partial ordering on 
$\omega(R)$ defined by $w' \le w$ 
if $w = w' + w_0$ for some $w_0 \in \omega(R)$.
If $R$ is moreover finitely generated, then 
$\omega(R)$ is so and each $w \in \omega(R)$ 
admits only finitely many 
$w' \in \omega(R)$ with $w' \le w$.
With the subalgebras $R_{<w} \subseteq R$
generated by all~$R_{w'}$, where $w' < w$,
we then can figure out the unique, 
finite set of \emph{generator degrees}
$$ 
\Omega_R \ := \ \{w \in \omega(R); \ R_w \not \subseteq R_{<w} \}.
$$
Denote by $\CAut_K(R)\subseteq \Aut_K(R)$ 
the subgroup of all graded $\KK$-algebra automorphisms 
of the form $(\phi,\id)$ and consider the 
symmetry group
$$
\Aut(\Orig_R)
\ :=\ 
\left\{\psi\in \Aut(K);\ \psi(\Orig_R) = \Orig_R\right\}
\ \subseteq\ 
\Aut(K).
$$

\begin{proposition}
\label{prop:exseq}
Let $R = \oplus_K R_w$ be a finitely generated,
integral $\KK$-algebra with an effective, pointed 
grading by a finitely generated abelian group~$K$.
\begin{enumerate}
\item
Every $(\varphi,\psi) \in \Aut_K(R)$ satisfies
$\psi(\Aut(\Orig_R)) = \Orig_R$. 
In particular, there is a well-defined homomorphism 
of groups 
$$
\qquad
\pi_{\orig}\colon \Aut_K(R) \ \to \ \Aut(\Orig_R),
\qquad\qquad
(\phi,\psi) \ \mapsto \ \psi.
$$
\item
The image 
$\Gamma := \pi_{\orig}(\Aut_K(R)) \subseteq \Aut(\Orig_R)$ 
is a finite group and we have an exact sequence of affine 
algebraic groups  
$$
\qquad
\xymatrix{
1\ar[r] 
& 
\CAut_K(R)
\ar[r] 
& 
\Aut_K(R)\ar[r]^{\quad \pi_\orig} 
& 
\Gamma\ar[r] & 1
}.
$$
\end{enumerate}
\end{proposition}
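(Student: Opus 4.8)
The plan is to establish the two assertions in turn: first that $\pi_\orig$ is well defined, and then that $\Aut_K(R)$ carries the structure of an affine algebraic group for which the stated sequence is exact.

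For part (i), I would begin from the fact that a graded automorphism $(\varphi,\psi)$ has a graded inverse $(\varphi^{-1},\psi^{-1})$; combining $\varphi(R_w)\subseteq R_{\psi(w)}$ with $\varphi^{-1}(R_{\psi(w)})\subseteq R_w$ forces $\varphi(R_w)=R_{\psi(w)}$ for every $w \in K$. Consequently $R_w \ne 0$ if and only if $R_{\psi(w)}\ne 0$, so $\psi$ restricts to a bijection of the weight monoid $\omega(R)$; being a group automorphism, it then preserves the partial order $w'\le w$. The decisive step is that $\varphi$ carries $R_{<w}$ onto $R_{<\psi(w)}$: indeed $R_{<w}$ is generated by the $R_{w'}$ with $w'<w$, whose images under $\varphi$ are precisely the $R_{\psi(w')}$ with $\psi(w')<\psi(w)$. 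Hence $R_w\not\subseteq R_{<w}$ holds exactly when $R_{\psi(w)}\not\subseteq R_{<\psi(w)}$, which yields $\psi(\Orig_R)=\Orig_R$ and places $\psi$ in $\Aut(\Orig_R)$. Well-definedness of $\pi_\orig$ follows, and its homomorphism property is immediate from $(\varphi_1,\psi_1)\circ(\varphi_2,\psi_2)=(\varphi_1\varphi_2,\psi_1\psi_2)$.

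For part (ii), I would first settle finiteness of $\Gamma$. Since the grading is effective, $\omega(R)$ generates $K$ as a group, and $\Orig_R$ generates $\omega(R)$ as a monoid; hence $\Orig_R$ generates $K$. Therefore any $\psi\in\Aut(\Orig_R)$ is determined by the permutation it induces on the finite set $\Orig_R$, so $\Aut(\Orig_R)$, and a fortiori its subgroup $\Gamma$, is finite. Exactness of the sequence is then formal: the kernel of $\pi_\orig$ consists of the $(\varphi,\psi)$ with $\psi=\id$, which is $\CAut_K(R)$ by definition, while surjectivity onto $\Gamma$ holds by the very definition of $\Gamma$ as the image.

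The substantive content, and the step I expect to be the main obstacle, is to equip $\Aut_K(R)$ with the structure of an affine algebraic group making $\pi_\orig$ a morphism. Here I would fix the finite-dimensional space $W:=\bigoplus_{w\in\Orig_R}R_w$ and use that the degrees in $\Orig_R$ generate $R$ as an algebra, so that every graded automorphism is determined by its restriction to $W$; by part (i) this restriction lies in $\GL(W)$ and permutes the summands $R_w$ according to $\psi$. This gives an injective group homomorphism $\Aut_K(R)\hookrightarrow\GL(W)$, and the task is to show its image is Zariski closed. Choosing homogeneous generators $f_1,\dots,f_r$ with a finite set of defining relations $h_1,\dots,h_s$, an element $g\in\GL(W)$ lies in the image precisely when it maps each $R_w$ into $R_{\psi(w)}$ for some $\psi$ (a finite union, over the finite group $\Aut(\Orig_R)$, of linear subspace conditions on $g$) and the assignment $f_i\mapsto g(f_i)$ respects every relation, i.e. $h_j(g(f_1),\dots,g(f_r))=0$ in $R$ for all $j$; expanding each such identity in a basis of $R$ in the relevant degree turns it into polynomial equations in the matrix entries of $g$. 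The remaining subtlety is that such a $g$ a priori only extends to a graded endomorphism $\t{g}$: one checks it is an automorphism by noting that $g(W)=W$ generates $R$, so $\t{g}$ is surjective, whence bijective since a surjective endomorphism of the Noetherian ring $R$ is injective (equivalently, by the dimension count $\dim R_w=\dim R_{\psi(w)}$ forced by $g\in\GL(W)$). These closed conditions realize $\Aut_K(R)$ as a closed subgroup of $\GL(W)$, hence as an affine algebraic group; $\CAut_K(R)$, cut out by the further closed condition $\psi=\id$, is a closed subgroup; and $\pi_\orig$, being constant on connected components with values in the finite group $\Gamma$, is a morphism of algebraic groups, completing the exact sequence.
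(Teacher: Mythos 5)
Your proof is correct, and it takes a genuinely more self-contained route than the paper, whose entire proof is three sentences: invariance of the generator degrees ``as the set of generator degrees is unique'', a citation of \cite{ArHaHeLi} for the fact that all groups in the sequence are affine algebraic, and ``the rest is obvious''. Your treatment of (i) (graded inverse, $\psi$ preserves $\omega(R)$ and its partial order, $\varphi(R_{<w})=R_{<\psi(w)}$, hence $\psi(\Orig_R)=\Orig_R$) and of finiteness in (ii) (effectiveness makes $\Orig_R$ generate $K$, so $\Aut(\Orig_R)$ injects into the symmetric group of the finite set $\Orig_R$) supplies exactly the details the paper compresses into ``unique'' and ``obvious''. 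The substantive divergence is the algebraicity claim, which the paper outsources to \cite{ArHaHeLi} and you prove directly: you embed $\Aut_K(R)$ into $\GL(W)$ with $W=\bigoplus_{w\in\Orig_R}R_w$, cut out the image by closed conditions --- a finite union over $\psi\in\Aut(\Orig_R)$ of block-mapping constraints together with preservation of finitely many generators of the relation ideal --- and close the loop with the fact that a surjective endomorphism of a Noetherian ring is injective, so the induced graded endomorphism is an automorphism. This is in substance a standalone reproof of the representation the paper only sets up later for computational purposes (Construction~\ref{constr:VandW} and Propositions~\ref{prop:stabpres} and~\ref{prop:autfinitepres}, where $V/I_V\cong\bigoplus_{w\in\Omega_R}R_w$ plays the role of your $W$). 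The paper's route buys brevity; yours buys independence from the external reference and anticipates machinery the paper needs anyway. Two small polish points: take $f_1,\dots,f_r$ to be a homogeneous \emph{basis} of $W$, so that the extension $\widetilde{g}$ visibly restricts to $g$ on $W$, and say explicitly that checking the relation condition on a finite generating set of the relation ideal suffices (if $h=\sum a_jh_j$, then $h(g(f_1),\dots,g(f_r))=\sum a_j(g(f))\,h_j(g(f))=0$).
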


\begin{proof}
As the set of generator degrees is unique, 
it is invariant under graded automorphisms.
For the fact that all groups of the 
sequence are affine algebraic, see for 
example~\cite{ArHaHeLi}.
The rest is obvious.
\end{proof}

By a \emph{minimal presentation}
of a $K$-graded $\KK$-algebra $R$ we mean  a 
$K$-graded polynomial ring $S := \KT{r}$ 
with $K$-homogeneous variables $T_i$
and a graded epimorphism 
$(\pi, \kappa) \colon \KT{r} \to R$
such that $\kappa \colon K \to K$ is
an isomorphism of groups and we have 
$$
\ker(\pi) 
\ \subseteq \ 
\<T_1,\ldots,T_r\>^2.
$$

We now fix the setting for our study of the group 
$\Aut_K(R)$ of graded algebra automorphisms:
$K$ will always be a finitely generated abelian 
group, $R$ an integral, finitely generated 
$\KK$-algebra and the $K$-grading of $R$ will 
always be effective and pointed.
Moreover, we fix a minimal presentation
of $(\pi,\id) \colon S \to R$, 
write $I := \ker(\pi)$ for the ideal of relations
and will frequently 
identify $R$ with~$S/I$.

\begin{remark}
For the minimal presentation 
$S \to R$ fixed above, the $K$-grading on 
$S$ is effective and pointed as 
well.
Moreover $\Orig_S = \Orig_R$ consists 
of the degrees $\deg(T_i) \in K$ of 
the variables $T_i \in S$.
\end{remark}

We now relate the graded automorphism 
group of $R$ to that of $S$.
For any group action $G \times M \to M$
and subsets $H \subseteq G$ and $N \subseteq M$,
the associated \emph{stabilizer} is 
the subset
$$
\Stab_N(H)
\ := \ 
\{g \in H; \ g \cdot N = N\}
\ \subseteq \ 
G.
$$
If the action is algebraic and 
$H \subseteq \GL(n)$ is closed (a subgroup), 
then $\Stab_N(H)$ is closed (a subgroup)
in $G$.
Here is how the stabilizers occur in our 
setting.

\begin{proposition}
\label{prop:stab}
Consider a minimally presented $K$-graded
$\KK$-algebra  $R=S/I$ as before.
Then we have a commutative diagram of 
affine algebraic groups with exact rows and columns
\[
\xymatrix@R=20pt{
&
1
&
1
&
&
\\
1
\ar[r]
&
\CAut_K(R)
\ar[r]
\ar[u]
&
\Aut_K(R)
\ar[r]^{\pi_{\orig}}
\ar[u]
&
\Gamma
\ar[r]
&
1
\\
&
\ar[u]^{\Phi' \colon (\phi,\psi)\mapsto(\phi_\pi,\psi)}
\Stab_I(\caut_K(S))
\ar[r]
&
\ar[u]_{\Phi \colon (\phi,\psi)\mapsto(\phi_\pi,\psi)}
\Stab_I(\aut_K(S))
&
&
\\
&
\ar[u]
\ker(\Phi')
\ar[r]
&
\ar[u]
\ker(\Phi)
&
&
\\
&
\ar[u]
1
&
\ar[u]
1
&
&
}
\]
where the horizontal sequence is as in Proposition~\ref{prop:exseq},
the map~$\Phi'$ is the restriction of~$\Phi$
and $\phi_\pi$ is the unique homomorphism turning
\[
\xymatrix{
S
\ar[rr]^{(\phi,\psi)}
\ar[d]_{(\pi,\id)}
&&
S
\ar[d]^{(\pi,\id)}
\\
R
\ar[rr]_{(\phi_\pi,\psi)}
&&
R
}
\]
into a commutative diagram of graded homomorphisms.
Moreover, the following statements hold.
\begin{enumerate}
\item 
We have $\ker(\Phi) = \ker(\Phi')$.
Setting $q_i := \deg(T_i)$ and
$G:=\Stab_I(\Aut_K(S))$, we obtain 
$\ker(\Phi)$ as the subgroup
\begin{align*}
\qquad
\ker(\Phi)\, &=\, 
\left\<(\phi,\id)\in G;\ \phi(T_i)-T_i\in I_{q_i}
\text{ for all $i$}\right\>\,\subseteq\,G.
\end{align*}
\item 
If the homogeneous components
$I_{q_1},\ldots,I_{q_r}$ of the degrees 
$q_i := \deg(T_i)$ are all trivial,
then $\Phi$ and $\Phi'$ are isomorphisms.
\end{enumerate}
\end{proposition}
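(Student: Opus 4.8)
The plan is to first install the two downward maps together with their commutativity, then establish exactness of the columns by proving that $\Phi$ is surjective (the left column following by restriction), and finally to read off items (i) and (ii). For the setup, note that any $(\phi,\psi)\in\Stab_I(\Aut_K(S))$ satisfies $\phi(I)=I$ by the very definition of the stabilizer, so $\phi$ descends to a graded automorphism $\phi_\pi$ of $R=S/I$ with grading map $\psi$; this is the unique map rendering the square commutative, and $\Phi\colon(\phi,\psi)\mapsto(\phi_\pi,\psi)$ is a homomorphism of affine algebraic groups since passage to the quotient is functorial. Its restriction to the subgroup where $\psi=\id$ is $\Phi'$, which lands in $\CAut_K(R)$. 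The top row is Proposition~\ref{prop:exseq} and the two upper squares commute by construction, so the whole assertion reduces to surjectivity of $\Phi$, $\Phi'$ with the stated kernels, together with (i) and (ii).

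For surjectivity of $\Phi$ I would fix $(\sigma,\psi)\in\Aut_K(R)$ and set $q_i:=\deg(T_i)$. Since $\pi$ is graded and surjective, each map $\pi\colon S_{\psi(q_i)}\to R_{\psi(q_i)}$ is onto, so I can pick $\phi(T_i)\in S_{\psi(q_i)}$ with $\pi(\phi(T_i))=\sigma(\pi(T_i))$; prescribing these images defines a graded $\KK$-algebra endomorphism $\phi$ of $S$ with grading map $\psi$. For homogeneous $f\in I$, using that $\sigma$ and $\pi$ are algebra homomorphisms, one gets $\pi(\phi(f))=\sigma(\pi(f))=0$, hence $\phi(I)\subseteq I$; therefore $\phi$ descends and $\phi_\pi$ agrees with $\sigma$ on the generators $\pi(T_i)$, so $\phi_\pi=\sigma$. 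It remains to see that $\phi$ is an automorphism. Lifting $\sigma^{-1}$ in the same way to a graded endomorphism $\phi'$ with $\phi'(I)\subseteq I$ and $(\phi')_\pi=\sigma^{-1}$, the composite $\Theta:=\phi'\circ\phi$ has grading map $\id$ and satisfies $\Theta_\pi=\id_R$, so $\Theta(T_i)-T_i\in I_{q_i}\subseteq\langle T_1,\dots,T_r\rangle^2$. Because the grading is pointed, any decomposable term of degree $q_i$ is a product of generators whose degrees lie strictly below $q_i$ in the partial order; hence $\Theta$ is unipotent for the filtration by the finite poset $\{w'\le q_i\}$ and is invertible, and likewise $\phi\circ\phi'$. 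Thus $\phi$ is an automorphism, $\phi(I)=I$, so $(\phi,\psi)\in G$ maps onto $(\sigma,\psi)$; restricting to $\psi=\id$ shows $\Phi'$ is onto $\CAut_K(R)$.

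Item (i) then follows quickly. Every element of $\ker(\Phi)$ must have $\psi=\id$, so it already lies in $\Stab_I(\CAut_K(S))$, and since $\Phi'$ is the restriction of $\Phi$ this yields $\ker(\Phi)=\ker(\Phi')$. For $(\phi,\id)\in G$ one has $\phi_\pi=\id$ exactly when $\pi(\phi(T_i))=\pi(T_i)$ for all $i$, i.e. $\phi(T_i)-T_i\in I$; as $\phi$ preserves degrees this difference lies in $S_{q_i}$ and hence in $I_{q_i}$. A short check, using $\phi(I_{q_i})\subseteq I_{q_i}$, shows this condition is closed under composition and inversion, so the set of such $(\phi,\id)$ is already a subgroup and coincides with the displayed description of $\ker(\Phi)$.

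Finally, for item (ii), if all $I_{q_i}$ vanish then by (i) the kernel of $\Phi$ is trivial, so $\Phi$ and its restriction $\Phi'$ are injective; combined with the surjectivity above they are bijective morphisms of affine algebraic groups over an algebraically closed field of characteristic zero, hence isomorphisms. I expect the main obstacle to lie in the automorphy of the lift in the surjectivity step: constructing $\phi$ and verifying $\phi(I)\subseteq I$ is purely formal, but upgrading the resulting endomorphism to an automorphism genuinely uses pointedness, which makes the homogeneous components finite-dimensional and forces the correction terms $\Theta(T_i)-T_i$ to involve only strictly smaller degrees, so that the induction over the finite poset terminates and produces an inverse.
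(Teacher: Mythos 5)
Your proof is correct and takes essentially the same route as the paper: what you do inline, the paper factors into two lemmas (a lifting lemma using surjectivity of $\pi$ on graded components, and a uniqueness statement for minimal presentations), and the paper's proof of the latter is precisely your argument that the composite $\Theta=\phi'\circ\phi$ of the lifts of $\sigma$ and $\sigma^{-1}$ has correction terms $\Theta(T_i)-T_i\in I_{q_i}\subseteq\langle T_1,\dots,T_r\rangle^2$ involving only strictly smaller degrees, so that pointedness and the resulting finite induction force bijectivity. Your handling of the kernel description and of item (ii) matches the paper's, which simply states that one ``directly verifies'' these points.
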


In the proof we need the following two lemmas,
where the second one is a certain uniqueness statement 
on minimal presentations and will also be used later.

\begin{lemma}
\label{lem:lift}
Let $(\phi,\psi) \colon S \to R$
and $(\pi,\kappa) \colon R' \to R$
be homomorphisms of $K$-graded $\KK$-algebras 
such that $S = \KT{r}$ holds, $\pi$ is 
surjective and $\kappa$ an isomorphism.
Then there is a commutative diagram
of homomorphisms of $K$-graded 
algebras:
\[
\xymatrix{
S
\ar[rr]^{(\widehat\phi,\widehat\psi)}
\ar[dr]_{(\phi,\psi)}
&&
R'
\ar[dl]^{(\pi,\kappa)}
\\
&
R
&
}
\]
\end{lemma}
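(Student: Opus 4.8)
The plan is to build $(\widehat\phi,\widehat\psi)$ by hand, exploiting that $S$ is a polynomial ring and $\kappa$ is invertible. Since $\kappa$ is an isomorphism of grading groups, the only sensible choice for the group map is $\widehat\psi := \kappa^{-1} \circ \psi$; this immediately gives $\kappa \circ \widehat\psi = \psi$, settling the grading-group component of the diagram. It remains to produce a $\KK$-algebra homomorphism $\widehat\phi \colon S \to R'$ that is graded with respect to $\widehat\psi$ and satisfies $\pi \circ \widehat\phi = \phi$.

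First I would reduce the construction of $\widehat\phi$ to the generators. Because $S = \KT{r}$ is free as a commutative $\KK$-algebra, any choice of images $\widehat\phi(T_i) := s_i \in R'$ extends uniquely to an algebra homomorphism $\widehat\phi \colon S \to R'$. Setting $q_i := \deg(T_i)$, the homomorphism $\widehat\phi$ will be graded relative to $\widehat\psi$ as soon as each $s_i$ is homogeneous of degree $\widehat\psi(q_i) = \kappa^{-1}(\psi(q_i))$: a monomial of degree $\sum_i a_i q_i$ is then sent to $\prod_i s_i^{a_i}$, which lies in $R'_{\widehat\psi(\sum_i a_i q_i)}$, so that $\widehat\phi(S_w) \subseteq R'_{\widehat\psi(w)}$ for every $w \in K$. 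Likewise, commutativity $\pi \circ \widehat\phi = \phi$ needs only to be checked on the $T_i$, where it reads $\pi(s_i) = \phi(T_i)$.

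The key step, and the one requiring care, is to choose the $s_i$ simultaneously homogeneous of the prescribed degree and mapping to $\phi(T_i)$ under $\pi$. Here I would use that a surjective graded homomorphism whose grading-group map is an isomorphism restricts to a surjection on each homogeneous component. Concretely, $\phi(T_i)$ is homogeneous of degree $\psi(q_i)$ since $\phi$ is graded; picking any preimage in $R'$ and decomposing it into homogeneous parts, the relation $\pi(R'_u) \subseteq R_{\kappa(u)}$ together with the injectivity of $\kappa$ forces the part in degree $u_0 := \kappa^{-1}(\psi(q_i))$ to already map onto $\phi(T_i)$. Taking this part as $s_i$ yields a homogeneous lift of degree $\widehat\psi(q_i)$, as required. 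The main obstacle is precisely this homogeneity of the lift; once it is secured, the remaining verifications are the routine checks on generators indicated above.
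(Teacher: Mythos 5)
Your proposal is correct and follows the same route as the paper's proof: set $\widehat\psi := \kappa^{-1}\circ\psi$, lift each $\phi(T_i)$ to a homogeneous element of $R'$ of degree $\kappa^{-1}(\psi(\deg T_i))$, and extend freely from the variables. The only difference is that you spell out the homogeneous-component argument for why such a lift exists and verify gradedness on monomials, details the paper leaves implicit.
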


\begin{proof}
Denote by $w_i \in K$ the degree of the variable
$T_i \in S$.
Each $\phi(T_i) \in R$ has a preimage 
$f_i' \in R'_{\kappa^{-1}(\psi(w_i))}$ under 
$(\pi,\kappa)$.
Define $\widehat \phi$ by $\widehat \phi(T_i) := f_i'$
and set $\widehat \psi :=  \kappa^{-1} \circ \psi$.
\end{proof}

\begin{lemma}
\label{lem:minpres}
Consider two minimal presentations 
$(\pi_i,\kappa_i) \colon S\to R$ 
of a $\KK$-algebra $R$ with an 
effective, pointed $K$-grading.
Then there is an isomorphism $(\phi,\psi)$ 
of $K$-graded algebras fitting into the 
commutative diagram
\[
\xymatrix{
S
\ar[rr]^{(\phi,\psi)}
\ar[dr]_{(\pi_1,\kappa_1)}
&&
S
\ar[dl]^{(\pi_2,\kappa_2)}
\\
&
R
&
}
\]
\end{lemma}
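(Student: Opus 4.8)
The plan is to construct the desired isomorphism by a double application of the lifting Lemma~\ref{lem:lift} and then to establish its invertibility through a triangularity argument that exploits exactly the two hypotheses minimality and pointedness.

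First I would apply Lemma~\ref{lem:lift} to the two presentations: taking the ``$(\phi,\psi)$'' of that lemma to be $(\pi_1,\kappa_1)$ and its ``$(\pi,\kappa)$'' to be $(\pi_2,\kappa_2)$ yields a graded homomorphism $(\phi,\psi)\colon S \to S$ with $\pi_2\circ\phi = \pi_1$ and $\kappa_2\circ\psi = \kappa_1$; in particular $\psi = \kappa_2^{-1}\circ\kappa_1$ is an isomorphism of $K$, and the triangle in the statement commutes. Applying the lemma once more with the roles of the two presentations exchanged produces a second graded homomorphism $(\phi',\psi')\colon S \to S$ with $\pi_1\circ\phi' = \pi_2$ and $\kappa_1\circ\psi' = \kappa_2$. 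It then remains only to show that $\phi$ is an isomorphism of algebras.

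To this end I would pass to the two composites $\theta := \phi'\circ\phi$ and $\theta' := \phi\circ\phi'$. The lifting relations give $\pi_1\circ\theta = \pi_1$ and $\pi_2\circ\theta' = \pi_2$, while $\kappa_1\circ(\psi'\circ\psi)=\kappa_1$ and $\kappa_2\circ(\psi\circ\psi')=\kappa_2$ force $\psi'\circ\psi = \id$ and $\psi\circ\psi' = \id$. Hence both $\theta$ and $\theta'$ are graded endomorphisms of $S$ \emph{over the identity} on $K$ that fix $R = S/I$. The decisive claim is that every such endomorphism is already an automorphism. Granting this, $\theta$ and $\theta'$ are bijective, so $\phi$ is injective (since $\theta$ is) and surjective (since $\theta'$ is), and therefore $(\phi,\psi)$ is the sought isomorphism of $K$-graded algebras.

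I expect the real work to sit in this last claim, and this is precisely where minimality and pointedness are needed. Writing $q_i := \deg(T_i)$, an endomorphism $\theta$ as above satisfies $\theta(T_i) - T_i \in I_{q_i}$, and minimality $I \subseteq \<T_1,\ldots,T_r\>^2$ forces $\theta(T_i) = T_i + h_i$ with $h_i$ homogeneous of degree $q_i$ and of total degree at least two. Since the grading is pointed and $R_0 = \KK$ yields $q_i \neq 0$, I would pick a linear functional $\ell\colon K_\QQ \to \QQ$ strictly positive on all generator degrees; then for every monomial $T^a$ occurring in $h_i$ the identity $\ell(q_i) = \sum_j a_j\,\ell(q_j)$ (with $\sum_j a_j \ge 2$ and each $\ell(q_j) > 0$) shows that only variables $T_j$ with $\ell(q_j) < \ell(q_i)$ can appear. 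This is the triangularity I want: fixing an $\ell$-value $d$, the span of monomials of that value is finite dimensional and preserved by $\theta$, and there $\theta - \id$ strictly raises the total degree in the $T_i$, hence is nilpotent. Thus $\theta$ is unipotent on each $\ell$-homogeneous piece, so bijective on $S$, which proves the claim and completes the argument.
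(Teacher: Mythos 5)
Your proof is correct, and for its first half it follows the same skeleton as the paper: both arguments apply Lemma~\ref{lem:lift} twice and reduce everything to showing that a degree-preserving graded endomorphism $\theta$ of $S$ with $\pi_1\circ\theta=\pi_1$ (hence, by minimality, $\theta(T_i)-T_i\in\ker(\pi_1)\subseteq\<T_1,\ldots,T_r\>^2$) is automatically bijective. Where you genuinely diverge is in how this key claim is proved. The paper shows surjectivity of $\theta$ by induction on the maximal length $k(w)$ of decompositions of $w\in\Omega_S$ into indecomposable generator degrees: for indecomposable $w$ the variables spanning $S_w$ are literally fixed by $\theta$, for $k(w)>1$ a basis $(T_{i_1},\ldots,T_{i_l},h_1,\ldots,h_m)$ with $h_j\in\<T_1,\ldots,T_r\>^2$ is reached via the induction hypothesis, and bijectivity then follows since each restriction $\theta\colon S_w\to S_w$ is a surjective endomorphism of a finite-dimensional space. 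You instead choose a linear form $\ell$ on $K_\QQ$ strictly positive on the generator degrees and show that $\theta=\id+N$ with $N$ strictly raising total degree on each finite-dimensional $\ell$-level, so $\theta$ is unipotent there; this yields injectivity and surjectivity in one stroke. Both routes exploit pointedness legitimately: yours makes its role transparent (existence of the positive functional), while the paper's induction stays inside the combinatorics of $\Omega_S$ and requires no choice of $\ell$. Your version also has the merit of treating both composites $\theta=\phi'\circ\phi$ and $\theta'=\phi\circ\phi'$ explicitly, which is what one really needs to conclude that $\phi$ itself is bijective; the paper passes over this symmetric step rather quickly. One small point you should make explicit: for $\ell$ to exist you need the images of the $q_i$ in $K_\QQ$ to be nonzero, i.e.\ that no $q_i$ is a nonzero torsion element. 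This does follow from pointedness of the grading on $S$ (otherwise some power $T_i^n$ would lie in $S_0=\KK$), but it deserves a sentence in your write-up.
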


\begin{proof}
According to Lemma~\ref{lem:lift}, there are
graded homomorphisms $(\phi_i,\psi_i) \colon S \to S$
such that the following diagram is commutative
\[
\xymatrix{
S
\ar@<2pt>[rr]^{(\phi_1,\psi_1)}
\ar[dr]_{(\pi_1,\kappa_1)}
&&
S
\ar@<2pt>[ll]^{(\phi_2,\psi_2)}
\ar[dl]^{(\pi_2,\kappa_2)}
\\
&
R
&
}
\]
In particular, $\tau := \phi_2\circ \phi_1 \colon S \to S$ 
is a degree preserving graded algebra endomorphism with 
$\pi_1\circ\tau = \pi_1$.
Using minimality of the presentation $(\pi_1, \kappa_1) \colon S \to R$, 
we obtain
$$
T_i - \tau(T_i) 
\ \in \ \ker(\pi_1)
\  \subseteq \ \<T_1,\ldots,T_r\>^2.
$$ 

We show that $\tau$ is surjective. 
For this, it suffices to show $\tau(S_w) = S_w$ 
for all $w \in \Orig_S$.
Since $\tau$ preserves degrees, we have 
$\tau(S_w) \subseteq S_w$.
To verify equality, we proceed by induction 
on the maximal length~$k(w)$ of the possible 
decompositions
$$
w \ = \ w_1 + \ldots + w_k,
\qquad
w_i \ \in \ \Orig_S
\quad
\text{indecomposable}.
$$ 
If $k(w) = 1$, then $w$ is indecomposable in $\Orig_S$
and $S_w$ is generated as a vector space by some of 
the $T_i$. Each $T_i \in S_w$ is fixed by $\tau$
which gives $\tau(S_w) = S_w$.
Assume $k(w) > 1$. Then we find a basis 
of the vector space $S_w$ of the form 
$$ 
(T_{i_1}, \ldots, T_{i_l}, h_1,\ldots h_m),
\qquad\qquad
h_j \ \in \ \<T_1,\ldots,T_r\>^2.
$$
Each $h_j$ is a polynomial in variables $T_i$ 
of degree $w_i \in \Orig_S$ such that $w_i < w$.
The latter implies $k(w_i) < k(w)$.
By induction hypothesis, these $S_{w_i}$, 
and hence $h_j$, are in the image of $\tau$.
This proves surjectivity of $\tau$.

We now immediately obtain bijectivity of $\tau$,
because each restriction $\tau \colon S_w  \to S_w$ 
is a surjective linear endomorphism of a 
finite-dimensional vector space and hence bijective.
Thus, $\tau = \phi_2\circ \phi_1$ is an isomorphism.
Then the graded homomorphisms~$\phi_i$ must 
be isomorphisms as well and we see that 
$(\phi,\psi) := (\phi_1,\psi_1)$ is as wanted.
\end{proof}

\begin{proof}[Proof of Proposition~\ref{prop:stab}]
We show that $\Phi$ is surjective.
Given a graded automorphism, $(\phi_0, \psi_0) \in \Aut_K(R)$,
we have two minimal presentations
$$
(\pi,\id) \colon S \ \to \ R, 
\qquad\qquad
(\phi_0, \psi_0) \circ (\pi,\id) \colon S \ \to \ R.
$$ 
Lemma~\ref{lem:minpres} provides us with an
isomorphism $(\phi,\psi)$ of graded $\KK$-algebras
fitting into the commutative diagram
\[
\xymatrix{
S
\ar[rr]^{(\phi,\psi)}
\ar[d]_{(\pi,\id)}
&&
S
\ar[d]^{(\pi,\id)}
\\
R
\ar[rr]_{(\phi_0,\psi_0)}
&&
R
}
\]
Clearly $\psi = \psi_0$ and thus $(\phi,\psi)$ is 
the desired preimage of $(\phi_0,\psi_0)$ under~$\Phi$. 
This establishes the first part.
For the statements on the kernel, one directly
verifies 
\begin{eqnarray*}
\ker(\Phi)
&=&
\<(\phi,\id)\in \Aut_K(S); \ \phi_\pi(\b T_i) = \b{T_i}\in R\text{ for all } i \>\\
&=&
\<(\phi,\id)\in \Aut_K(S); \ \phi(T_i) - T_i \in I\text{ for all } i \>
\\
&=&
\<(\phi,\id)\in \Aut_K(S);\ \phi(T_i) - T_i \in I_{q_i}\text{ for all } i\>.
\end{eqnarray*}
where $\varphi_\pi \colon R \to R$ is the unique homomorphism 
induced by $(\varphi,\psi)$ as introduced in the assertion.
\end{proof}

Similar to the unique set of generator 
degrees $\Omega_R \subseteq K$ of a
$K$-graded $\KK$-algebra~$R$,
we have the unique set $\Omega_I \subseteq K$
of \emph{ideal generator degrees} 
for the homogeneous ideal $I \subseteq S$: 
let $I_{<w} \subseteq S$ be the ideal 
generated by all ideal components 
$I_{w'} \subseteq I$ with $w' < w$ and set
$$ 
\Omega_I 
\ := \ 
\{w \in K; \; I_w \not\subseteq I_{<w} \}.
$$

\begin{proposition}
\label{prop:idweights}
Every automorphism
$(\varphi,\psi) \in \Aut_K(R)$
satisfies $\psi(\Omega_I) = \Omega_I$.
\end{proposition}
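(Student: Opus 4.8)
The plan is to transport the intrinsic description of $\Omega_I$ through an automorphism of the polynomial ring $S$ rather than working directly on $R$. The key point is that $\Omega_I$ is defined purely in terms of the homogeneous components of the ideal $I \subseteq S$ and the partial order on degrees, so I first want to realize the grading automorphism $\psi$ induced by a given $(\varphi_0,\psi_0)\in\Aut_K(R)$ as coming from a graded automorphism of $S$ that stabilizes $I$. This is exactly what surjectivity of $\Phi$ in Proposition~\ref{prop:stab} provides: for $(\varphi_0,\psi_0)\in\Aut_K(R)$ there is a preimage $(\phi,\psi)\in\Stab_I(\Aut_K(S))$, and as in that proof one has $\psi=\psi_0$. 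Hence it suffices to prove $\psi(\Omega_I)=\Omega_I$ for an arbitrary $(\phi,\psi)\in\Aut_K(S)$ with $\phi(I)=I$.

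Next I would record two compatibilities of such a $(\phi,\psi)$ with the data defining $\Omega_I$. First, since $\phi$ restricts to a linear isomorphism $S_w\to S_{\psi(w)}$ and is bijective with $\phi(I)=I$, it maps $I_w=I\cap S_w$ onto $\phi(I)\cap\phi(S_w)=I\cap S_{\psi(w)}=I_{\psi(w)}$; thus $\phi(I_w)=I_{\psi(w)}$ for every $w\in K$. Second, $\psi$ respects the partial ordering on degrees: by Proposition~\ref{prop:exseq}(1) it permutes the generator degrees $\Omega_S=\Omega_R$, and since the weight monoid $\omega(R)=\omega(S)$ is generated by these degrees, the group automorphism $\psi$ maps it bijectively onto itself. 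Consequently $w-w'\in\omega(S)\setminus\{0\}$ holds if and only if $\psi(w)-\psi(w')\in\omega(S)\setminus\{0\}$, that is, $w'<w \Leftrightarrow \psi(w')<\psi(w)$.

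Combining these, I would show that $\phi$ carries the truncated ideals to one another. Applying the algebra automorphism $\phi$ to $I_{<w}$, which is generated by the components $I_{w'}$ with $w'<w$, gives the ideal generated by the $\phi(I_{w'})=I_{\psi(w')}$ with $w'<w$; by order-preservation these run exactly over the components $I_u$ with $u<\psi(w)$, so $\phi(I_{<w})=I_{<\psi(w)}$. Finally, because $\phi$ is injective, the defining condition for $\Omega_I$ transfers: $I_w\not\subseteq I_{<w}$ holds if and only if $\phi(I_w)\not\subseteq\phi(I_{<w})$, i.e. $I_{\psi(w)}\not\subseteq I_{<\psi(w)}$. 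This says $w\in\Omega_I\Leftrightarrow\psi(w)\in\Omega_I$, which is the claim $\psi(\Omega_I)=\Omega_I$.

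The only genuinely non-formal steps are the reduction in the first paragraph, where one must invoke Proposition~\ref{prop:stab} to replace the abstract automorphism of $R$ by one of $S$ preserving $I$, and the identity $\phi(I_{<w})=I_{<\psi(w)}$, whose proof rests entirely on $\psi$ preserving the partial order. I expect the latter to be the main point to get right, since it is precisely what forces the two truncations to correspond and thereby makes the defining inequality for $\Omega_I$ invariant.
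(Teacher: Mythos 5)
Your proof is correct and takes essentially the same route as the paper: the paper's one-line proof lifts $(\varphi,\psi)$ to a graded automorphism of $S$ via Lemma~\ref{lem:minpres}, which is precisely the surjectivity of $\Phi$ in Proposition~\ref{prop:stab} that you invoke, so the reduction is identical. You additionally spell out the bookkeeping the paper leaves implicit --- that the lift satisfies $\phi(I_w)=I_{\psi(w)}$, that $\psi$ preserves the partial order since it stabilizes $\Omega_S$ and hence the weight monoid, and therefore $\phi(I_{<w})=I_{<\psi(w)}$ --- all of which is accurate.
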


\begin{proof}
According to Lemma~\ref{lem:minpres},
the graded automorphism $(\varphi,\psi)$ 
admits a lifting $(\widehat \varphi,\psi)$
with respect to the minimal presentation 
$(\pi,\id) \colon S \to R$.
\end{proof}

\begin{construction}
\label{constr:VandW}
Consider $S = \KT{r}$ and $G := \Aut_K(S)$.
Then we have finite dimensional 
vector subspaces
$$ 
V 
\ := \ 
\bigoplus_{w \in \Omega_S} S_w
\ \subseteq \ 
S,
\qquad\qquad
W
\ := \ 
\bigoplus_{u \in \Omega_I} S_u
\ \subseteq \ 
S.
$$
Moreover, $V$ is invariant under 
the (linear) $G$-action on $S$
and the induced representation 
$G \to \GL(V)$ is faithful. 
\end{construction}

\begin{proof}
Since the grading is pointed and 
$\Omega_S$ as well as $\Omega_I$ 
are finite, $V$ and $W$ are of finite
dimension.
Proposition~\ref{prop:exseq}~(i) 
guarantees that $V$ is $G$-invariant
and the induced representation is 
faithful, because $V$ generates $S$ 
as a $\KK$-algebra.
\end{proof}

The following observation will allow us to 
compute the stabilizer $\Stab_I(G)$,
which is an essential step in the computation
of $\Aut_K(R)$:

\begin{proposition}
\label{prop:stabpres}
Notation as in Construction~\ref{constr:VandW}.
Set $I_W := I \cap W$. Then $I_W$ generates 
the ideal~$I$ and the stabilizer $\Stab_I(G)$ 
is given as 
$$
\Stab_I(G)
\ = \ 
\Stab_{I_W}(G)
\ = \ 
\{g \in G; \ g \mal I_W = I_W\}
\ \subseteq \
G.
$$
\end{proposition}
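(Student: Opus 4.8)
The plan is to prove two things: first that $I_W := I \cap W$ generates the ideal $I$, and second that stabilizing $I$ under the $G$-action is equivalent to stabilizing the finite-dimensional piece $I_W$. I would organize the argument around the set $\Omega_I$ of ideal generator degrees and the homogeneous decomposition it controls.

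For the generation claim, I would first unfold the definition of $\Omega_I$ and $I_{<w}$. The key structural fact is that $I$ is homogeneous, so $I = \bigoplus_{w \in K} I_w$, and by the very definition of $\Omega_I$ together with the pointedness of the grading (which guarantees, as in the discussion preceding $\Omega_R$, that each $w$ has only finitely many $w' \le w$ in the weight monoid), a Noetherian-type induction on the partial order shows that the components $I_w$ for $w \in \Omega_I$ already generate $I$ as an ideal. Indeed, for any $w$ one has $I_w \subseteq I_{<w} + \sum_{u \in \Omega_I, u \le w} S \cdot I_u$, and descending induction on the length $k(w)$ of decompositions (exactly the induction used in the proof of Lemma~\ref{lem:minpres}) closes the argument. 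Since $W = \bigoplus_{u \in \Omega_I} S_u$, we have $I_W = \bigoplus_{u \in \Omega_I} I_u$, and these are precisely the generating components, so $I_W$ generates $I$.

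For the stabilizer identity, the inclusion $\Stab_I(G) \subseteq \Stab_{I_W}(G)$ is the substantive direction, and I would argue it as follows. Take $g = (\phi,\psi) \in G = \Aut_K(S)$ with $g \cdot I = I$. The action is graded, so $g$ maps $I_u$ into $I_{\psi(u)}$. Here I would invoke Proposition~\ref{prop:idweights}, which tells us $\psi(\Omega_I) = \Omega_I$; hence $g$ permutes the generating components $\{I_u : u \in \Omega_I\}$ among themselves, and therefore $g \cdot I_W = I_W$, giving $g \in \Stab_{I_W}(G)$. Conversely, if $g \cdot I_W = I_W$, then since $I_W$ generates $I$ and $g$ is an algebra automorphism (so $g \cdot I = g \cdot (S \cdot I_W) = S \cdot (g \cdot I_W) = S \cdot I_W = I$), we get $g \in \Stab_I(G)$. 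The middle equality in the displayed chain is then just the definition of the stabilizer applied to the set $I_W$.

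I expect the main obstacle to be the careful bookkeeping in the generation step: one must confirm that $\Omega_I$ is finite and that the induction on the partial order is well-founded, both of which rest on the pointedness of the grading. The cleanest route is to mirror the finiteness argument already established for $\Omega_R$ in the text before Proposition~\ref{prop:exseq}, noting that $I$ is a homogeneous ideal in the pointedly graded $S$, so the same descending induction applies verbatim. The stabilizer equivalence itself is then essentially formal once Proposition~\ref{prop:idweights} is in hand, the only subtlety being to use that $\psi$ permutes $\Omega_I$ rather than merely fixing each weight, which is exactly what Proposition~\ref{prop:idweights} supplies.
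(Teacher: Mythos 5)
Your proof is correct. Note that the paper states Proposition~\ref{prop:stabpres} without any proof at all, so there is no argument of the authors to compare against; what you wrote supplies exactly the details their setup intends: well-founded induction over the partial order $\le$ (available because the grading is pointed and $S$ is finitely generated, so each degree has only finitely many smaller ones) shows that the components $I_u$, $u\in\Omega_I$, generate $I$, and Proposition~\ref{prop:idweights} yields the permutation of those components, after which the converse inclusion is formal. Two points you gloss over, both harmless: (a) Proposition~\ref{prop:idweights} is stated for elements of $\Aut_K(R)$, so to apply it to $g=(\phi,\psi)\in\Stab_I(G)$ you must first note that $g$ descends to an automorphism $(\phi_\pi,\psi)\in\Aut_K(R)$ with the same $\psi$ — this is precisely the map $\Phi$ of Proposition~\ref{prop:stab}, whose definition needs $g\cdot I=I$ (alternatively, one can argue directly in $S$: since $\psi$ preserves $\Omega_S$ and hence the weight monoid and the order, $\phi(I_u)=I_{\psi(u)}$ and $\phi(I_{<u})=I_{<\psi(u)}$, so $\psi(\Omega_I)=\Omega_I$); and (b) gradedness plus $\psi(\Omega_I)=\Omega_I$ a priori gives only $g\cdot I_W\subseteq I_W$, and the required equality follows since $I_W$ is finite dimensional, or by applying the same reasoning to $g^{-1}$.
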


Finally, we arrive at a finite dimensional 
faithful representation of $\Aut_K(R)$ used 
in our computations.

\begin{proposition}
\label{prop:autfinitepres}
Notation as in Construction~\ref{constr:VandW}.
Set $I_V := I \cap V$. 
Then the minimal presentation 
$(\pi,\id) \colon S \to R$ 
induces an isomorphism of vector 
spaces
$$ 
V / I_V \ \to \ \bigoplus_{w \in \Omega_R} R_w.
$$
Moreover, $I_V$ is invariant under $\Stab_I(G)$, 
we have an induced representation 
$\varrho \colon \Stab_I(G) \to \GL(V/I_V)$
and an isomorphism
$$
\varrho(\Stab_I(G)) \ \cong \ \Aut_K(R).
$$
\end{proposition}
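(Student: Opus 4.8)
The plan is to handle the three preparatory assertions directly and to reduce the final isomorphism to the surjection $\Phi$ of Proposition~\ref{prop:stab}. For the vector space isomorphism I would first invoke the Remark following the minimal presentation, giving $\Omega_S = \Omega_R$, so that $V = \bigoplus_{w \in \Omega_S} S_w$ and $\bigoplus_{w \in \Omega_R} R_w$ share the same index set. Restricting the graded epimorphism $\pi$ to $V$ produces a surjection onto $\bigoplus_{w \in \Omega_R} R_w$, since each $\pi \colon S_w \to R_w$ is onto; as $I$ is homogeneous, its intersection with $V$ is $\bigoplus_{w \in \Omega_S} I_w$, which is precisely the kernel of $\pi|_V$. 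Passing to the quotient then yields $V/I_V \cong \bigoplus_{w \in \Omega_R} R_w$.

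Next I would verify the invariance and set up the representation. By Proposition~\ref{prop:exseq}(i) applied to $S$, every $(\phi,\psi) \in G = \Aut_K(S)$ satisfies $\psi(\Omega_S) = \Omega_S$, and since $\phi(S_w) = S_{\psi(w)}$, the subspace $V$ is invariant under all of $G$. For $g \in \Stab_I(G)$ one then has $g \cdot I_V = (g \cdot I) \cap (g \cdot V) = I \cap V = I_V$, so $I_V$ is stabilized; dividing out gives the representation $\varrho \colon \Stab_I(G) \to \GL(V/I_V)$.

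The substantial step, and the one I expect to be the main obstacle, is the identification $\varrho(\Stab_I(G)) \cong \Aut_K(R)$. My plan is to exhibit $\varrho$ as the composite $\sigma \circ \Phi$, where $\Phi \colon \Stab_I(G) \to \Aut_K(R)$ is the surjection of Proposition~\ref{prop:stab} and $\sigma \colon \Aut_K(R) \to \GL(V/I_V)$ is the action of $\Aut_K(R)$ on $\bigoplus_{w \in \Omega_R} R_w \cong V/I_V$; the commuting square defining $\phi_\pi$ gives $\pi(\phi(v)) = \phi_\pi(\pi(v))$, hence $\varrho = \sigma \circ \Phi$. It then remains to prove $\sigma$ faithful. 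Here I would argue that if $(\phi_0,\psi_0) \in \Aut_K(R)$ fixes $\bigoplus_{w \in \Omega_R} R_w$ pointwise, then picking $0 \neq \bar r \in R_w$ for $w \in \Omega_R$ forces $\bar r \in R_w \cap R_{\psi_0(w)}$, so $\psi_0(w) = w$; as $\Omega_R$ generates $K$ this gives $\psi_0 = \id$, and $\phi_0$ then fixes a generating set of $R$, whence $\phi_0 = \id$. With $\sigma$ faithful and $\Phi$ surjective we obtain $\varrho(\Stab_I(G)) = \sigma(\Aut_K(R))$. The remaining delicate point is algebraicity: all maps in sight are morphisms of affine algebraic groups, so $\sigma$ induces a bijective morphism of $\Aut_K(R)$ onto its closed image, which in characteristic zero is an isomorphism; this yields $\varrho(\Stab_I(G)) \cong \Aut_K(R)$ as algebraic groups.
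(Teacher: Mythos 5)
Your proof is correct and takes essentially the same route as the paper: the paper's entire proof is a one-line reduction to Proposition~\ref{prop:stab}, which is exactly the core of your argument ($\varrho = \sigma \circ \Phi$ with $\Phi$ the surjection from that proposition). The details you supply --- the vector-space isomorphism $V/I_V \cong \bigoplus_{w \in \Omega_R} R_w$, the invariance of $I_V$, the faithfulness of $\sigma$, and the algebraicity of the resulting bijection --- are precisely what the paper leaves implicit in that citation.
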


\begin{proof}
Proposition~\ref{prop:stab} gives us the desired isomorphism.
\end{proof}

\begin{corollary}
\label{cor:autdimbound}
Let $K$ be a finitely generated 
abelian group and $R = \oplus_{w \in K} R_w$ 
an integral, finitely generated $\KK$-algebra
with an effective, pointed $K$-grading.
Then the dimension of the group of graded automorphisms 
is bounded by
$$ 
\dim(\Aut_K(R))
\ \le \ 
\sum_{w \in \Omega_R} \dim(R_w)^2.
$$ 
\end{corollary}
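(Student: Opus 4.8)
The plan is to obtain the bound by combining the faithful finite-dimensional representation of $\Aut_K(R)$ provided by Proposition~\ref{prop:autfinitepres} with the fact that graded automorphisms must respect the $K$-grading. First I would invoke Proposition~\ref{prop:autfinitepres}, which yields an isomorphism $\varrho(\Stab_I(G)) \cong \Aut_K(R)$ together with the vector space isomorphism
$$
V/I_V \ \cong \ \bigoplus_{w \in \Omega_R} R_w.
$$
Since $\Aut_K(R)$ is realized as a closed subgroup of $\GL(V/I_V)$ via $\varrho$, I would bound its dimension by understanding how small the ambient group can be made once the grading constraint is taken into account.

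The key observation is that every element $(\phi,\psi) \in \Aut_K(R)$ has $\psi \in \Gamma$, a \emph{finite} group by Proposition~\ref{prop:exseq}~(ii). Passing to the unit component, every automorphism in $\Aut_K(R)^0$ lies in $\CAut_K(R)$, i.e. satisfies $\psi = \id$ and therefore preserves each homogeneous component $R_w$ individually. Consequently the induced action on $V/I_V \cong \bigoplus_{w \in \Omega_R} R_w$ is block-diagonal with respect to this decomposition, so $\CAut_K(R)$ embeds as a closed subgroup of $\prod_{w \in \Omega_R} \GL(R_w)$. This gives
$$
\dim(\CAut_K(R)) \ \le \ \sum_{w \in \Omega_R} \dim(\GL(R_w)) \ = \ \sum_{w \in \Omega_R} \dim(R_w)^2.
$$

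Finally, because $\Gamma$ is finite, the exact sequence of Proposition~\ref{prop:exseq}~(ii) shows $\dim(\Aut_K(R)) = \dim(\CAut_K(R))$, which completes the estimate. The main point to be careful about is the block-diagonal structure of the representation: I must check that an element of $\CAut_K(R)$ genuinely acts within each summand $R_w$ of $\bigoplus_{w \in \Omega_R} R_w$ rather than merely permuting them, and this is exactly what $\psi = \id$ guarantees, since then $\phi(R_w) \subseteq R_{\psi(w)} = R_w$. Everything else reduces to the standard fact $\dim \GL(n) = n^2$ and the additivity of dimension over a finite product, so no serious obstacle remains once the representation-theoretic setup is in place.
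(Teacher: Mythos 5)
Your proof is correct and follows essentially the same route the paper intends: the corollary is stated there without an explicit proof precisely because it is the immediate consequence of Proposition~\ref{prop:autfinitepres} (the faithful finite-dimensional representation on $\bigoplus_{w\in\Omega_R}R_w$) combined with the finiteness of $\Gamma$ from Proposition~\ref{prop:exseq}, i.e.\ the reduction to the block-diagonally acting subgroup $\CAut_K(R)\subseteq\prod_{w\in\Omega_R}\GL(R_w)$ that you carry out. You also correctly identify the one point that needs care — the naive bound $\dim\GL(V/I_V)=\bigl(\sum_w\dim R_w\bigr)^2$ is too weak, and the block structure coming from $\psi=\id$ is what yields the sharper sum of squares — so there is no gap.
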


\section{Basic Algorithms}
\label{sec:algos}

We present the algorithms for computing 
the automorphism group $\Aut_K(R)$ of a 
$\KK$-algebra~$R$ graded by a finitely 
generated abelian group $K$.
Our algorithms are formulated in a 
manner allowing, up to standard 
considerations in linear algebra, 
a direct implementation.

We work in the setting of 
Section~\ref{sec:symalg}.
In particular, the $K$-grading of $R$ 
is effective and pointed.
Moreover, we fix a minimal presentation 
$(\pi,\id)\colon S \to R$
with a polynomial ring $S:=\KT{r}$ 
and denote $I := \ker(\pi)$;
such a minimal presentation is directly 
obtained by removing successively 
redundant generators from any presentation
by generators and relations.

Recall that we have $\Orig_S = \Orig_R$ for the respective 
sets of generator weights of $S$ and $R$.  
A first basic step is to determine the automorphism group 
$\Aut(\Orig_S)$.

\begin{remark}[Computing $\Aut(\Orig_S)$]
\label{rem:autorig} 
Represent the grading group $K$ as a direct sum of 
a free part and its $p$-torsion parts, i.e.,
$$
K
\ = \ 
\ZZ^k \oplus \bigoplus_{i=1}^l K_{p_i},
\qquad\qquad
K_{p_i}
\ = \
\bigoplus_{j=1}^{n_i}
\ZZ/p_i^{k_{ij}}\ZZ
$$
with pairwise different primes $p_i$ and $k_{ij} \le k_{ij+1}$.
Set $n := k + n_1 + \ldots + n_l$. 
Then the automorphisms $\psi \colon K \to K$
are induced by integral block matrices:
$$
\hspace*{3cm}
\vcenter{ 
\xymatrix{
\ZZ^n 
\ar[r]^{A}
\ar[d]
&
\ZZ^n
\ar[d]
\\
K
\ar[r]_\psi
&
K
}
},
\qquad\qquad
\vcenter{
$
A
\ = \
\left[
\begin{array}{cc}
 B & 0
\\
 C & D
 \end{array}
\right]
$
}
$$
with $B \in \GL(k,\ZZ)$ and $D$ suitably
block diagonal, see~\cite{TuVo,autfinitegrps}
for the precise conditions on $D$.
The entries of $D$ and $C$ are 
nonnegative and bounded by the 
smallest annihilator $a > 0$ of 
the torsion part of $K$. 
The subgroup $\Aut(\Orig_S) \subseteq \Aut(K)$ 
is now obtained as follows.
\begin{itemize}
\item 
Determine the (finitely many)  $B \in \GL(k,\ZZ)$
stabilizing the set of the $\ZZ^k$-parts of 
the generator degrees~$\Omega_S$.
\item 
Determine the (finitely many) matrices~$A$ with a
$B$-block from the previous step and figure out 
those defining an automorphism of $K$ 
stabilizing~$\Omega_S$.
\end{itemize}
\end{remark}

As a second step, we make Construction~\ref{constr:VandW}
explicit and realize $\Aut_K(S)$ via concrete 
equations as a subgroup of a general linear group
$\GL(n)$.
For later applications, we include the treatment 
of subgroups of the following type in $\Aut_K(S)$: 
given a subgroup
$\Sigma \subseteq \Aut(\Orig_S)$, set
$$ 
\Aut_{K,\Sigma}(S)
\ := \ 
\{(\varphi,\psi) \in \Aut_K(S); \; \psi \in \Sigma\}
\ \subseteq \ 
\Aut_K(S).
$$

\begin{construction}
\label{con:repres}
Consider the $K$-graded ring $S = \KT{r}$ and
write $\Orig_S = \{w_1, \ldots, w_s\}$ with
pairwise different $w_j$.
For each $i= 1, \ldots, s$, set $d_i := \dim_{\KK}(S_{w_i})$ 
and fix a basis~$\mathcal{B}_i$ consisting of 
monomials for the homogeneous component $S_{w_i}$.
The concatenation 
$$
\mathcal{B} 
\ := \ 
(\mathcal{B}_1, \ldots, \mathcal{B}_s)
$$ 
is a basis for the direct sum $V := \oplus_i S_{w_i}$
of all $S_{w_i}$.
Now, every graded automorphism $(\phi,\psi)\in\Aut_K(S)$ 
defines a linear automorphism
$$
V
\to \ 
V,
\qquad 
S_{w_i} \ni f 
\ \mapsto \ 
\varphi(f) \in S_{\psi(w_i)}.
$$
We denote by $A(\phi,\psi) \in \GL(n)$,
where $n = d_1 + \ldots + d_s$, 
the matrix representing this map with respect 
to $\mathcal{B}$. 
This gives rise to a faithful matrix representation
$$ 
\varrho \colon
\Aut_K(S) \ \to \ \GL(n), 
\qquad 
(\phi,\psi) \ \mapsto \ A(\phi,\psi).
$$
\end{construction}


\begin{construction}
\label{constr:matonpol}
Notation as in Construction~\ref{con:repres}.
Via the basis $\mathcal{B}$, 
every matrix $A \in \GL(n)$ defines a linear 
endomorphism $\varphi_A \colon V \to V$.
Since the variables $T_1, \ldots, T_r$ of 
$S$ belong to $V$ we can define for every 
$f \in S$ the polynomial
$$ 
A \cdot f
\ := \ 
f(\varphi_A(T_1), \ldots, \varphi_A(T_r))
\ \in \ 
S.
$$
\end{construction}

\begin{definition}
\label{def:autcond}
Notation as in Constructions~\ref{con:repres}
and~\ref{constr:matonpol}.
Let $\Sigma \subseteq \Aut(\Orig_S)$
be a subgroup.
We introduce three types of matrices:
\begin{enumerate}
\item
a matrix $A \in \GL(n)$ is \emph{$S$-admissible} 
if for any two monomials $T^{\nu_1}$, $T^{\nu_2}$ 
such that the degrees of $T^{\nu_1}$, $T^{\nu_2}$ 
and $T^{\nu_1+\nu_2}$ belong to 
$\Omega_S = \{w_1,\ldots,w_s\}$ we have 
$$
A\cdot (T^{\nu_1}T^{\nu_2}) \ = \ (A\cdot T^{\nu_1})(A\cdot T^{\nu_2}). 
$$
\item
a matrix $A \in \GL(n)$ is \emph{$\Orig_S$-diagonal} 
if it is block diagonal 
with blocks $A_1,\ldots,A_s$, where $A_i \in \GL(d_i)$,
\item
a matrix $B \in \GL(n)$ is 
\emph{$\Sigma$-permuting}
if there is a $\sigma \in \Sigma$ such 
that~$B$ 
sends each $\mathcal{B}_i$ bijectively 
to $\mathcal{B}_j$, where $w_j = \sigma(w_i)$.
\end{enumerate}
\end{definition}

\begin{proposition}
\label{prop:autcond}
Notation as in Constructions~\ref{con:repres}
and~\ref{constr:matonpol}.
Then $\varrho(\Aut_{K,\Sigma}(S))$ consists exactly of
the $S$-admissible matrices $AB \in \GL(n)$, 
where $A \in \GL(n)$ is $\Orig_S$-diagonal and
$B \in \GL(n)$ is $\Sigma$-permuting. 
\end{proposition}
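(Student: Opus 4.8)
The plan is to establish the two inclusions separately, relying throughout on two observations. First, by Proposition~\ref{prop:exseq}(i) the grading-group part $\psi$ of any graded automorphism of $S$ restricts to a permutation of $\Omega_S$, so for $(\varphi,\psi)\in\Aut_K(S)$ one has $\varphi(S_{w_i})\subseteq S_{\psi(w_i)}\subseteq V$. Second, by Construction~\ref{con:repres} the linear endomorphism of $V$ attached to $M:=A(\varphi,\psi)$ is exactly the restriction $\varphi|_V$, and the point of $S$-admissibility is that it records the multiplicativity of the $\GL(V)$-action on those monomials whose factors and product all have degree in $\Omega_S$; this is precisely the condition that lets one pass between a linear map on $V$ and an algebra homomorphism of $S$.

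For the inclusion $\varrho(\Aut_{K,\Sigma}(S))\subseteq\{AB\}$, I start with $(\varphi,\psi)\in\Aut_{K,\Sigma}(S)$ and put $M:=A(\varphi,\psi)$. Since $\varphi_M=\varphi|_V$ and $\varphi$ is an algebra homomorphism, for generator-degree monomials $T^{\nu_1},T^{\nu_2}$ with $T^{\nu_1+\nu_2}$ again of generator degree we get $\varphi_M(T^{\nu_1+\nu_2})=\varphi(T^{\nu_1})\varphi(T^{\nu_2})=\varphi_M(T^{\nu_1})\varphi_M(T^{\nu_2})$, so $M$ is $S$-admissible. To factor $M=AB$, note that $\psi\in\Sigma$ and that $\varphi$ restricts to isomorphisms $S_{w_i}\cong S_{\psi(w_i)}$, whence $d_i=d_{\psi(w_i)}$; choosing for each $i$ a bijection $\mathcal{B}_i\to\mathcal{B}_{\psi(w_i)}$ yields a $\Sigma$-permuting matrix $B$ realising $\psi$. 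As both $M$ and $B$ send the $i$-th block to the $\psi(w_i)$-th block, the matrix $A:=MB^{-1}$ preserves every block and is invertible, hence $\Omega_S$-diagonal, and $AB=M$ is admissible as desired.

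For the reverse inclusion I take an $S$-admissible $AB$ with $A$ being $\Omega_S$-diagonal and $B$ being $\Sigma$-permuting for some $\psi:=\sigma\in\Sigma$, and reconstruct an automorphism. I define $\varphi$ to be the algebra endomorphism of $S$ with $\varphi(T_i):=\varphi_{AB}(T_i)$; a degree count using that $B$ carries $\mathcal{B}_i$ into $S_{\psi(w_i)}$ and that $A$ preserves blocks shows $\varphi(T_i)\in S_{\psi(w_i)}$, so $(\varphi,\psi)$ is a graded endomorphism. The heart of the matter is to check $\varrho(\varphi,\psi)=AB$, i.e.\ that $\varphi|_V=\varphi_{AB}$: both are linear on $V$ and agree on the variables, and $S$-admissibility forces them to agree on every composite generator-degree monomial, reducing such a monomial to products of generator-degree factors. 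Finally $(\varphi,\psi)$ is an automorphism: since $AB\in\GL(n)$ the map $\varphi|_V$ is a linear isomorphism, so $\varphi$ restricts to isomorphisms on each $S_{w_i}$ and, $V$ generating $S$, an induction on the generator degrees exactly as in the proof of Lemma~\ref{lem:minpres} upgrades this to bijectivity of $\varphi$ (alternatively, $(AB)^{-1}$ is again of the admissible factored form and furnishes the inverse). As $\psi=\sigma\in\Sigma$, this gives $(\varphi,\psi)\in\Aut_{K,\Sigma}(S)$ with $\varrho(\varphi,\psi)=AB$.

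The main obstacle I anticipate lies in the reverse inclusion, specifically in showing that the purely linear datum $AB$ on the finite-dimensional space $V$ actually integrates to an algebra automorphism of $S$. The difficulty is that $AB$ prescribes the images of all generator-degree monomials, while an algebra homomorphism must send a product to the product of the images; the verification that $S$-admissibility reconciles these—propagating multiplicativity from the two-factor case to arbitrary composite generator-degree monomials—together with the passage from surjectivity to bijectivity of $\varphi$, is the delicate technical core of the argument.
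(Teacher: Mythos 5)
Your proof is correct and takes essentially the same route as the paper's: your forward inclusion factors $M = A(\varphi,\psi)$ as $AB$ with $B$ the permutation matrix realising $\psi$ and $A := MB^{-1}$ the resulting block-diagonal part (the paper builds the blocks of $A$ directly from the restrictions $\varphi \colon S_{w_i} \to S_{\psi(w_i)}$, which is the same thing), and your reverse inclusion, exactly as in the paper, defines $\varphi$ by substituting $\varphi_{AB}(T_i)$ for $T_i$, uses $S$-admissibility to identify $\varphi|_V$ with $\varphi_{AB}$, and concludes $\varrho(\varphi,\psi) = AB$ with $\psi = \sigma \in \Sigma$. Where you differ, you are in fact more explicit than the paper, which simply asserts that $f \mapsto AB \cdot f$ is a graded algebra \emph{automorphism} coinciding with $\varphi_{AB}$ on the monomials of $V$; your bijectivity argument (the induction of Lemma~\ref{lem:minpres}, or alternatively that $(AB)^{-1}$ is again an admissible product furnishing the inverse) supplies a verification the paper leaves implicit.
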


\begin{proof}
First let $(\phi,\psi) \in \Aut_{K,\Sigma}(S)$.
Then, for each $i = 1, \ldots, s$, the linear 
isomorphism $\phi \colon V \to V$ restricts 
to a linear isomorphism 
$\phi_i \colon V_{w_i} \to V_{\psi(w_i)}$. 
Denote by $A_i \in \GL(d_i)$ the representing 
matrix of $\phi_i$ with respect to the bases 
$\mathcal{B}_i$ and~$\mathcal{B}_{j(i)}$, where 
$j(i)$ is defined via $\psi(w_i) = w_{j(i)}$. 
Moreover, let $B \in \GL(n)$ denote the 
permutation matrix sending the 
basis vectors $v_{ik}$ of 
$\mathcal{B}_i = (v_{i1}, \ldots, v_{id_i})$
to the basis vectors $v_{j(i)k}$ of 
$\mathcal{B}_{j(i)} = (v_{j(i)1}, \ldots, v_{j(i)d_i})$.
Then $A := \diag(A_{j(1)},\ldots,A_{j(s)})$ is 
$\Omega_S$-diagonal,
$B$ is $\Sigma$-permuting and we have 
$A(\phi,\psi) = AB$.
As the matrix $AB$ represents the restriction of the 
algebra homomorphism $\phi$ to the vector subspace 
$V \subseteq S$, it is compatible with 
the multiplication of polynomials and thus 
$S$-admissible.

Conversely, let $A \in \GL(n)$ be $\Omega_S$-diagonal 
and $B \in \GL(n)$ be $\Sigma$-permuting with respect
to $\psi \in \Sigma$ such that
$AB$ is $S$-admissible.
Via the basis $\mathcal{B}$, the matrix
$AB$ defines the linear automorphism 
$\varphi_{AB} \colon V \to V$.
Moreover, $f \mapsto AB \cdot f$ defines 
a graded algebra automorphism 
$(\varphi,\psi) \colon S \to S$, where 
$\psi \in \Sigma$ belongs to $B$ as fixed 
above.
Since $AB$ is $S$-admissible, we see that
$\varphi$ and $\varphi_{AB}$ coincide on the 
monomials lying in $V$.
Being linear maps, $\varphi$ and $\varphi_{AB}$
then coincide on $V$ and we conclude 
$AB = A(\varphi,\psi)$.
\end{proof}

\begin{corollary}
\label{cor:autcond}
In the special cases $\Sigma = \{\id_K\}$ 
and $\Sigma = \Aut(\Orig_S)$,
Proposition~\ref{prop:autcond} gives the following.
\begin{enumerate}
\item 
The image $\varrho(\CAut_K(S))$ consists exactly 
of the matrices $A \in \GL(n)$ which are 
$\Orig_S$-diagonal and $S$-admissible.
\item
The image $\varrho(\Aut_K(S))$ consists exactly of
the $S$-admissible matrices $AB \in \GL(n)$, 
where
$A \in \GL(n)$ is $\Orig_S$-diagonal and
$B \in \GL(n)$ is $\Aut(\Orig_S)$-permuting. 
\end{enumerate}
\end{corollary}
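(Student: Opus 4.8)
The plan is to obtain both items as direct specializations of Proposition~\ref{prop:autcond}, the only real work being to identify the relevant subgroups $\Aut_{K,\Sigma}(S)$ and, in the first case, to simplify the shape of the admissible matrices.

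First I would pin down the two subgroups. For $\Sigma = \{\id_K\}$ the defining condition $\psi \in \Sigma$ forces $\psi = \id_K$, so by definition $\Aut_{K,\{\id_K\}}(S) = \CAut_K(S)$. For $\Sigma = \Aut(\Orig_S)$ I would invoke Proposition~\ref{prop:exseq}(i): the group part $\psi$ of every $(\varphi,\psi) \in \Aut_K(S)$ automatically stabilizes $\Orig_S$, so the constraint $\psi \in \Aut(\Orig_S)$ is vacuous and $\Aut_{K,\Aut(\Orig_S)}(S) = \Aut_K(S)$. With these identifications, item~(ii) is nothing but Proposition~\ref{prop:autcond} read off verbatim for $\Sigma = \Aut(\Orig_S)$.

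The single point needing an argument is item~(i), where the general description via products $AB$ must be reconciled with the claimed description by $\Orig_S$-diagonal matrices alone. The key observation is that a $\{\id_K\}$-permuting matrix $B$ is already $\Orig_S$-diagonal: taking $\sigma = \id_K$ in the definition of $\Sigma$-permuting (Definition~\ref{def:autcond}), the condition $w_j = \sigma(w_i) = w_i$ forces $j = i$ because the $w_i$ are pairwise distinct, so $B$ merely permutes the vectors of each block $\mathcal{B}_i$ among themselves and is thus block diagonal. Hence every product $AB$ occurring in Proposition~\ref{prop:autcond} is again $\Orig_S$-diagonal, and conversely any $\Orig_S$-diagonal matrix is obtained by choosing $B$ to be the identity. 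Intersecting with the $S$-admissibility condition then yields exactly the asserted description of $\varrho(\CAut_K(S))$.

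I do not expect a genuine obstacle here, since the corollary is essentially bookkeeping on top of Proposition~\ref{prop:autcond}. The one place to proceed carefully is the absorption argument in~(i): I would check that the set of products $\{AB\}$ really collapses onto the set of $\Orig_S$-diagonal matrices, using that the $\Orig_S$-diagonal matrices are closed under the relevant products, so that multiplying a diagonal $A$ by a within-block permutation $B$ stays inside this set and, letting $A$ range and $B$ be trivial, exhausts it.
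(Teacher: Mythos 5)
Your proof is correct and follows the same route as the paper, which states the corollary without a separate proof precisely because it is the direct specialization of Proposition~\ref{prop:autcond} to $\Sigma = \{\id_K\}$ and $\Sigma = \Aut(\Orig_S)$. Your extra care in part~(i) — noting that a $\{\id_K\}$-permuting matrix is itself $\Orig_S$-diagonal (since the $w_i$ are pairwise distinct) and that the identity matrix is $\{\id_K\}$-permuting, so the products $AB$ are exactly the $\Orig_S$-diagonal matrices — is exactly the bookkeeping the paper leaves implicit.
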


In the subsequent algorithms, we say that an
ideal $\mathfrak{a} \subseteq \KK[T_1, \ldots, T_n]$ 
\emph{describes} a subset $X \subseteq \KK^n$ if
$X$ equals the zero set of $\mathfrak{a}$
and we say that $\mathfrak{a}$ \emph{defines}
a closed subset $X \subseteq \KK^n$ if $\mathfrak{a}$ 
equals the vanishing ideal of $X$.

\begin{algorithm}[Computing~$\Aut_{K,\Sigma}(S)$]
\label{algo:autks}
\emph{Input:}
the $K$-graded polynomial ring $S$
and a subgroup $\Sigma \subseteq \Aut(\Orig_S)$.
\begin{itemize}
\item 
Compute a basis $\BBB =(b_1,\ldots,b_n)$ for $V = \oplus_i S_{w_i}$
as in Construction~\ref{con:repres}.
\item 
Set $S' := \KK[T_{ij}; \; 1 \le i,j \le n]$.
\item
Let $I' \subseteq S'$ be an ideal describing the 
$S$-admissible matrices in~$\GL(n)$. 
\item
Let $J \subseteq S'$ be an ideal describing the
$\Orig_S$-diagonal matrices in~$\GL(n)$. 
\item
Compute the (finite) set $\mathfrak{B}$ of 
$\Sigma$-permuting matrices.
\item
For each $B \in \mathfrak{B}$ form the product $J := J \cdot (B^*J)$.
\end{itemize}
\emph{Output:} 
the ideal 
$I'+J \subseteq \KK[T_{ij}; \; 1 \le i,j \le n]$.
It describes the subgroup $\Aut_{K,\Sigma}(S) \subseteq \GL(n)$.
\end{algorithm}

We proceed with computing equations for 
the stabilizer of the ideal of relations 
$I \subseteq S$ of $R$ 
in $\Aut_{K,\Sigma}(S) \subseteq \GL(n)$,
where we follow the lines of 
Proposition~\ref{prop:stabpres}.

\begin{algorithm}[Computing $\Stab_I(\Aut_{K,\Sigma}(S))$]
\label{algo:stab}
\emph{Input:}
the $K$-graded polynomial ring $S$ and the defining 
ideal $I \subseteq S$ of $R$
and a subgroup $\Sigma \subseteq \Aut(\Orig_S)$.
\begin{itemize}
\item
Let $I'+J \subseteq S' := \KK[T_{ij}; \; 1 \le i,j \le n]$
be the output of Algorithm~\ref{algo:autks}. 
\item 
Determine $\Omega_I$ and the vector space $W := \oplus_{\Omega_I} S_u$.
\item
Determine a basis $(h_1,\ldots,h_l)$ for 
the vector subspace $I_W = I \cap W \subseteq W$.
\item
Compute linear forms $\ell_{1},\ldots,\ell_{m} \in W^*$ 
with $I_W$ as common zero set.
\item 
With $T=(T_{ij})$ and the $\GL(n)$-action on $S$
from Construction~\ref{constr:matonpol} 
define the ideal 
$$
J'
\ := \
\left\< 
\ell_{i}\left(T\cdot h_j\right); \ 1\leq i\leq m ,\ 1\leq j\leq l
\right\>
\ \subseteq\ S'.
$$
\end{itemize}
\emph{Output:} 
the ideal
$I'+J+J' \subseteq S'$. 
It describes the subgroup $\stab_I(\Aut_{K,\Sigma}(S)) \subseteq \GL(n)$.
\end{algorithm}

Finally, we implement Proposition~\ref{prop:autfinitepres}
to compute in particular $\Aut_K(R)$ as a subgroup 
of a suitable general linear group $\GL(k)$.
We still consider subgroups 
$\Sigma \subseteq \Aut(\Orig_S) = \Aut(\Orig_R)$ and set
$$ 
\Aut_{K,\Sigma}(R)
\ := \ 
\{(\varphi,\psi) \in \Aut_K(R); \; \psi \in \Sigma\}
\ \subseteq \ 
\Aut_{K}(R).
$$

\begin{algorithm}[Computing $\Aut_{K,\Sigma}(R)$]
\label{algo:quotrep}
\emph{Input:}
the $K$-graded polynomial ring $S$ and the defining 
ideal $I \subseteq S$ of $R$
and a subgroup $\Sigma \subseteq \Aut(\Orig_S)$.
\begin{itemize}
\item
Let $I'+ J + J' \subseteq S' := \KK[T_{ij}; \; 1 \le i,j \le n]$
be the output of Algorithm~\ref{algo:stab}. 
\item
Determine the vector subspace $I_V = I \cap V$ of $V = \oplus_{\Orig_S} S_w$.  
\item 
Determine a basis $u_1, \ldots, u_k$ for $V/I_V$ and set 
$S'' := \KK[T_{ij}; \ 1 \le i,j \le k]$.
\item
Compute a defining ideal $J'' \subseteq S''$ for the image of the 
homomorphism 
$$
\GL(n) \supseteq \Stab_I(\Aut_{K,\Sigma}(S)) 
\ \to \ 
\GL(k) = \Aut(V/I_V)
$$
of algebraic groups arising from 
the projection $\KK^n = I_V \to V/I_V = \KK^k$. 
\end{itemize}
\emph{Output:}
the ideal $J''\subseteq \KK[T_{ij};\,1\leq i,j\leq k]$.
It defines 
the subgroup $\Aut_{K,\Sigma}(R) \subseteq \GL(k)$.
\end{algorithm}

\begin{remark}
Algorithm~\ref{algo:stab} makes no use of Gr\"obner bases,
whereas the image computation in the last step of 
Algorithm~\ref{algo:quotrep} involves determining
the kernel of a ring map, which usually is performed
via Gr\"obner basis computations.
However, if all homogeneous components $I_{w} \subseteq I$ 
with $w\in \Omega_S$ are trivial, 
then Proposition~\ref{prop:stab} provides a 
describing ideal for 
$\Aut_K(R) \cong \Stab_I(\Aut_K(S))$ 
without any Gr\"obner basis computation.
\end{remark}

\begin{remark}
The output of Algorithm~\ref{algo:quotrep}
allows to compute the dimension 
as well as the number of connected 
components of~$\Aut_K(R)$ by standard
algorithms using Gr\"obner bases.
\end{remark}

\begin{remark}[Computing $\Gamma$
from Proposition~\ref{prop:exseq}]
\label{rem:cautgamma}
Compute the groups
$\CAut_K(R)$ and $\Aut_K(R)$
with Algorithm~\ref{algo:autks}.
Then the factor group 
$\Gamma$ consists of those $\sigma \in \Aut(\Omega_S)$
that admit an $\Aut(\Omega_S)$-permuting matrix $B$ 
with 
$$
B^* \Stab_I(\Aut_K(S)) \cap \Stab_I(\Aut_K(S))
\ \ne \ 
\emptyset.
$$ 
The $\Aut(\Omega_S)$-permuting matrices are computed in 
Algorithm~\ref{algo:autks}
and the displayed condition is computable in 
terms of the describing ideal of the stabilizer,
which in turn is provided by Algorithm~\ref{algo:stab}.
\end{remark}

\begin{remark} 
Proceeding similarly as in Algorithm~\ref{algo:stab}, 
one can compute the \emph{transporter} of two homogeneous 
ideals $I_1,I_2\subseteq S$, that is,
the closed subset 
$\{g \in \Aut_K(S); \ g\cdot I_1\subseteq I_2\}$. 
\end{remark}

\begin{example}[$A_32A_1$-singular Gorenstein log del Pezzo $\KK^*$-surface]
\label{ex:running1}
Recall, for example from~\cite{ArDeHaLa, Hu:diss}, 
that the Cox ring $R$ of the singular Gorenstein 
log del Pezzo $\KK^*$-surface $X$ with singularity type $A_32A_1$ is
$$
R\, =\, S/I,\qquad S\, :=\, \KT{5},\qquad I\,:=\,\<T_1T_2 + T_3^2 + T_4^2\>,
$$
where $R$ is effectively and pointedly graded by $K:=\ZZ^2\oplus \ZZ/2\ZZ$ 
with generator degrees $w_i = \deg(T_i)$ given by
\begin{center}
\begin{minipage}{5.5cm}
\vspace*{-.1cm}
\begin{align*}
[w_1,\ldots,w_5]
\,&:=\,
\left[
\mbox{\footnotesize $
\begin{array}{rrrrr}
1 & 1 & 1 & 1 & 1 \\
1 & -1 & 0 & 0 & 1\\
\b 1 & \b 1 & \b 1 & \b 0 & \b 0
\end{array}
$}
\right],
\end{align*}
\end{minipage}
\qquad \qquad 
\begin{minipage}{3.5cm}
\footnotesize
\begin{tikzpicture}[scale=.7]

\fill[color=black!40] (0,0) -- (1,1) -- (1,-1) -- cycle;
\draw[->, dashed] (-.5,0) -- (1.4,0);
\draw[->, dashed] (0,-1.13) -- (0,1.13);

\draw[thick] (0,0) -- (1,1);
\draw[thick] (0,0) -- (1,-1);

\fill[color=black] (1,1) circle (1.6pt) node[anchor=west]{$w_1, w_5$};
\fill[color=black] (1,0) circle (1.6pt) node[anchor=north west]{$w_3, w_4$};
\fill[color=black] (1,-1) circle (1.6pt) node[anchor=west]{$w_2$};

\draw (3,0) node[anchor=west]{$\subseteq\,K\otimes \QQ$};
\end{tikzpicture}
\end{minipage}
\end{center}
Observe that we have $\Orig_S=\{w_1,\ldots,w_5\}$ and 
Remark~\ref{rem:autorig} yields
the symmetries of generator weights
$$
\Aut(\Orig_S)
\ \cong\  
\left\{
\id,\,
\psi_1
\right\}
\ \cong\  
\ZZ/2\ZZ
,\qquad
\psi_1\colon K\to K,\quad
g\mapsto 
\left[\mbox{\tiny
$\begin{array}{rrr}
1 & 0 & 0 \\ 
0 & 1 & 0 \\ 
1 & 1 & 1
\end{array}$}
\right]\cdot g.
$$
Algorithm~\ref{algo:autks} first produces 
the bases $\BBB_i=(T_i)$ for the components 
$S_{w_i}$ from Construction~\ref{con:repres}
and then returns the description of $G:=\Aut_K(S)$:
\begin{eqnarray*}
 G
 &\cong&
 \hphantom{\cup}
 \left\{
\diag(a_{1,1},a_{2,2},a_{3,3}, a_{4,4}, a_{5,5});\ 
a_{i,j}\in \KK^*
 \right\}
 \  \cup
 \\
 &&
 \left\{
\left[\mbox{\tiny $ \begin{array}{rrrrr}
a_{1,1} & 0 & 0 & 0 & 0\\
0 & a_{2,2} & 0 & 0 & 0\\
0 & 0 & 0 & a_{3,4} & 0\\
0 & 0 & a_{4,3} & 0 & 0\\
0 & 0 & 0 & 0 & a_{5,5}\\
\end{array}
$}\right];\  
a_{i,j}\in \KK^*
 \right\}
 \ \subseteq\ \GL(5).
\end{eqnarray*}
Note that in this case, 
we did not need equations for the $S$-admissible property.
These matrices correspond to explicit automorphisms 
as noted in Construction~\ref{con:repres}.
For instance, the second type of matrices in the above description of $G$
 belongs to elements $(\phi,\psi)\in\Aut_K(S)$ with $\psi =\psi_1$
 and $\phi$ given by
 \begin{gather*}
T_1\mapsto a_{1,1}T_1,\qquad
T_2\mapsto a_{2,2}T_2,\qquad
T_3\mapsto a_{3,4}T_4,\qquad
T_4\mapsto a_{4,3}T_3,\qquad
T_5\mapsto a_{5,5}T_5.
\end{gather*}
Since all $I_{\deg(T_i)}$ are trivial, 
we can use Algorithm~\ref{algo:stab} 
and see that $\Aut_K(R)$, as a subgroup 
of $\GL(5)$, is given as a union of two sets,
each of which consists of two connected 
components: 
\begin{align*}
\Aut_K(R)
\ = \ 
&\left\{
\diag(a_{1,1},a_{2,2},a_{3,3}, a_{4,4}, a_{5,5})
\in 
\GL(5) 
;\ 
\mbox{\footnotesize $
\begin{array}{l}
a_{3,3}^2=a_{4,4}^2,\\ 
a_{1,1}a_{2,2}=a_{3,3}^2
\end{array}
$}
\right\}
\\
\cup\ 
&\left\{
\left[\mbox{\tiny $ \begin{array}{rrrrr}
a_{1,1} & 0 & 0 & 0 & 0\\
0 & a_{2,2} & 0 & 0 & 0\\
0 & 0 & 0 & a_{3,4} & 0\\
0 & 0 & a_{4,3} & 0 & 0\\
0 & 0 & 0 & 0 & a_{5,5}\\
\end{array}
$}\right]\in \GL(5);\ 
\mbox{\footnotesize $
\begin{array}{l}
a_{3,4}^2=a_{4,3}^2,\\
a_{1,1}a_{2,2}=a_{4,3}^2
\end{array}
$}
\right\}.
\end{align*}
In particular, we obtain $\dim(\Aut_K(R)) = 3$
and we see that the unit component $\Aut_K(R)^0$
is of index four in $\Aut_K(R)$.
Moreover, we directly verify that $\Aut_K(R)$ 
is isomorphic to the semidirect product 
$\ZZ/2\ZZ \ltimes (\ZZ/2\ZZ \times (\KK^*)^3)$.
\end{example}

\section{Automorphism groups of Mori dream spaces}
\label{sec:sym-mds}

Here we apply the results of the preceding sections 
to study automorphism groups of Mori dream spaces.
Recall that a normal projective variety $X$ is a 
\emph{Mori dream space} if it has finitely generated 
divisor class group $\Cl(X)$ and finitely generated 
\emph{Cox ring}
$$ 
\mathcal{R}(X) 
\ = \ 
\bigoplus_{[D] \in \Cl(X)} \Gamma(X, \mathcal{O}(D)),
$$
where the case of torsion in $\Cl(X)$ requires
a little care in this definition; 
see~\cite[Sec.~1.4.2]{ArDeHaLa}
for details.
Any Mori dream space $X$ has a  
\emph{total coordinate space} 
$\b{X} = \spec \,\mathcal{R}(X)$.
The $\Cl(X)$-grading of $\mathcal{R}(X)$
defines an action of the 
\emph{characteristic quasitorus}
$H := \spec\, \KK[\Cl(X)]$ on $\b{X}$ 
and we recover $X$ as a GIT quotient 
$X = \widehat X \quot H$ of the set of 
semistable points $\widehat X \subseteq \b{X}$ 
associated with any ample class 
$[D] \in \Cl(X) = \Chi(H)$ of~$X$;
see~\cite[Sec~3.2.1]{ArDeHaLa} for details.

In the study of the automorphism group $\Aut(X)$,
several related groups are important.
The~\emph{$H$-equivariant automorphisms} of $\b{X}$ 
are pairs $(\phi, \tilde \phi)$, where  
$\phi \colon \b X \to \b X$ is an automorphism 
of varieties and $\widetilde \phi \colon H \to H$
is an automorphism of linear algebraic groups 
such that for all $x \in \b{X}$ and $h \in H$
we have
$$
\phi(t\cdot x)  
\ = \ 
\tilde\phi(t) \cdot \phi(x).
$$
Write $\aut_H(\b X)$ for the group of all such 
automorphisms of $\b{X}$ and, analogously, 
$\aut_H(\widehat X)$ for those of $\widehat X$.
Also the group $\Bir_2(X)$ of birational
automorphisms of $X$ defined on an open subset
of $X$ having complement of codimension at
least two plays a role.
All these groups are affine algebraic 
and are related to each other via the following 
commutative diagram where the rows are exact
sequences and the upward inclusions 
are of finite index:
$$
\xymatrix{
1
\ar[r]
&
H
\ar[r]
&
{\Aut_H(\b{X})}
\ar[r]
&
{{\Bir_2(X)}}
\ar[r]
&
1
\\
1
\ar[r]
&
H
\ar[r]
\ar@{}[u]|{\rotatebox{90}{\text{$=$}}}
&
{\Aut_H(\rq{X})}
\ar[r]
\ar@{}[u]|{\rotatebox{90}{\text{$\subseteq$}}}
&
{\Aut(X)}
\ar[r]
\ar@{}[u]|{\rotatebox{90}{\text{$\subseteq$}}}
&
1,
}
$$
see Theorem~\cite[Thm.~2.1]{ArHaHeLi}.
Moreover, the subgroup $\caut_H(\b X)\subseteq\aut_H(\b X)$,
consisting of all pairs of the form $(\phi,\id)\in\aut_H(\b X)$,
occurs according to~\cite[Cor.~2.8]{ArHaHeLi} 
in the exact sequence 
$$
\xymatrix{
1
\ar[r]
&
H
\ar[r]
&
{\CAut_H(\b{X})^0}
\ar[r]
&
{\Aut(X)^0}
\ar[r]
&
1.
}
$$

\begin{remark}
\label{rem:X2R}
The link to the results of the preceding sections 
is the following. 
Set $R := \mathcal{R}(X)$ and $K := \Cl(X)$.
Then $\caut_H(\b X)$ is isomorphic to $\CAut_K(R)$ 
and $\aut_H(\b X)$ is isomorphic to $\Aut_K(R)$, 
see~\cite[Cor.~2.3]{ArHaHeLi}.
\end{remark}

A very first observation is a bound on the dimension 
of the automorphism group of a Mori dream space in
terms of its Cox ring and the rank its divisor class 
group.

\begin{proposition}
\label{prop:autbound}
Let $X$ be a Mori dream space with Cox ring $\mathcal{R}(X)$ 
and let $\Omega_X \subseteq \Cl(X)$ denote the set of 
generator degrees. Then we have
$$ 
\dim(\Aut(X))
\ \le \ 
\left(
\sum_{w \in \Omega_X} \dim(\mathcal{R}(X)_{w})^2 
\right)
-
\dim(\Cl(X)_\QQ).
$$ 
\end{proposition}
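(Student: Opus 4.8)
The plan is to relate $\dim(\Aut(X))$ to $\dim(\Aut_K(R))$ via the structural results already recorded for Mori dream spaces, and then invoke the dimension bound for graded automorphism groups from Corollary~\ref{cor:autdimbound}. First I would set $R := \mathcal{R}(X)$ and $K := \Cl(X)$ as in Remark~\ref{rem:X2R}, so that $\Orig_X = \Orig_R$ and $\mathcal{R}(X)_w = R_w$; this rewrites the first summand on the right-hand side as $\sum_{w \in \Orig_R} \dim(R_w)^2$, which is exactly the bound of Corollary~\ref{cor:autdimbound} for $\dim(\Aut_K(R))$.

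The next step is to convert a bound on $\dim(\Aut_K(R))$ into one on $\dim(\Aut(X))$. By Remark~\ref{rem:X2R} we have $\Aut_K(R) \cong \Aut_H(\b X)$, and the exact sequence displayed before Remark~\ref{rem:X2R},
$$
\xymatrix{
1 \ar[r] & H \ar[r] & {\Aut_H(\b{X})} \ar[r] & {\Bir_2(X)} \ar[r] & 1,
}
$$
shows $\dim(\Bir_2(X)) = \dim(\Aut_H(\b X)) - \dim(H)$. Since $H = \spec \KK[\Cl(X)]$ is the characteristic quasitorus, its dimension equals the rank of $\Cl(X)$, that is $\dim(H) = \dim(\Cl(X)_\QQ)$. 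Finally, the accompanying vertical inclusions in that same diagram are of finite index, so $\Aut(X) \subseteq \Bir_2(X)$ is finite-index and hence $\dim(\Aut(X)) = \dim(\Bir_2(X))$.

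Putting these together yields
$$
\dim(\Aut(X))
\ = \
\dim(\Aut_K(R)) - \dim(\Cl(X)_\QQ)
\ \le \
\left( \sum_{w \in \Orig_X} \dim(\mathcal{R}(X)_w)^2 \right) - \dim(\Cl(X)_\QQ),
$$
which is the asserted inequality. I would note that in fact the argument gives an exact description of the correction term: the $-\dim(\Cl(X)_\QQ)$ comes precisely from quotienting out the acting quasitorus $H$, whose dimension is the free rank of $\Cl(X)$. The only point demanding care is the identification $\dim(H) = \dim(\Cl(X)_\QQ)$, since $H$ is only a quasitorus and $\Cl(X)$ may carry torsion; but torsion contributes finite components rather than dimension, so the free rank is what enters, consistent with $\dim(\Cl(X)_\QQ)$. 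I do not expect a serious obstacle here, as every ingredient is either cited from~\cite{ArHaHeLi} or is the immediate specialization of Corollary~\ref{cor:autdimbound}; the proof is essentially an assembly of these facts.
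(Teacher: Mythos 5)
Your proof is correct and follows essentially the same route as the paper: it combines the exact sequences displayed before Remark~\ref{rem:X2R} (giving $\dim(\Aut(X)) = \dim(\Aut_H(\overline{X})) - \dim(H)$ via the finite-index inclusions), the identification $\Aut_H(\overline{X}) \cong \Aut_K(R)$ from Remark~\ref{rem:X2R}, the equality $\dim(H) = \dim(\Cl(X)_\QQ)$, and the bound of Corollary~\ref{cor:autdimbound}. The paper's proof is just a terser statement of exactly this assembly, so your write-up can be viewed as its fleshed-out version.
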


\begin{proof}
The divisor class group $\Cl(X)$ 
is the character group of the 
characteristic quasitorus $H$ and 
thus the dimension of $H$ equals 
the dimension of the rational vector 
space $\Cl(X)_\QQ$.
The assertion is then an immediate consequence 
the above sequences, Remark~\ref{rem:X2R} 
and Corollary~\ref{cor:autdimbound}.  
\end{proof}

\begin{remark}
As the example of the projective spaces $\PP^n$
shows, the bound of Proposition~\ref{prop:autbound}
is sharp.   
\end{remark}

Another application relates the automorphism group 
of $X$ to that of a certain ambient toric variety.
The choice of a minimal system of $K$-prime 
generators of the Cox ring $R$ gives rise  
to a minimal presentation $\KT{r} \to R$.
Fixing an ample class $[D] \in \Cl(X)$ of 
$X$ leads to a commutative diagram
$$ 
\xymatrix{
{\b X}
\ar[r]
&
{\b Z}
\\
{\widehat X}
\ar[r]
\ar@{}[u]|{\rotatebox{90}{\text{$\subseteq$}}}
\ar[d]_{\quot H}
&
{\widehat Z}
\ar@{}[u]|{\rotatebox{90}{\text{$\subseteq$}}}
\ar[d]^{\quot H}
\\
X
\ar[r]
&
Z
}
$$
where $\b{Z} = \KK^r$, the subsets 
$\widehat X \subseteq \b X$ and 
$\widehat Z \subseteq \b Z$ 
are the sets of semistable points
defined by $[D]$
and the horizontal arrows are closed 
embeddings.
The quotient variety $Z = \widehat Z \quot H$ 
is a projective toric variety.
We refer to~\cite[Sec.~3.2.5]{ArDeHaLa} 
for the details of this construction.

\begin{theorem}
\label{thm:bir2}
Consider the closed embedding $X \subseteq Z$ 
into the toric variety $Z$ given above.
\begin{enumerate}
\item
There is an open subset $U \subseteq Z$ with 
$U \cap X \ne \emptyset$ such that all maps 
of $\Bir_2(Z)$ induce automorphisms of $U$.
\item
According to~(i), the stabilizer $\Stab_X(\Bir_2(Z))$ 
is well-defined and, with the subgroup $\Cent_X(\Bir_2(Z))$
of elements defining the identity on $X$, we have
$$ 
\qquad
\Bir_2(X) 
\ \cong \ 
\Stab_X(\Bir_2(Z)) / \Cent_X(\Bir_2(Z)).
$$
\item
With the stabilizer $\Stab_X(\Aut(Z))$ of $X$ 
and the subgroup $\Cent_X(\Aut(Z))$ of all 
elements leaving all points of $X$ fixed, 
we have
$$ 
\qquad
\Aut(X)^0 
\ \cong \ 
(\Stab_X(\Aut(Z)) / \Cent_X(\Aut(Z)))^0.
$$
\item
Let $w_i := \deg(T_i) \in \Cl(X)$ be the
degrees of the generators $T_i \in S = \KT{r}$.
If $S_{w_1} \cup \ldots \cup S_{w_r}$ contains no
relations of $\mathcal{R}(X)$, then we have   
$$ 
\qquad
\Bir_2(X) 
\ \cong \ 
\Stab_X(\Bir_2(Z)),
\qquad
\Aut(X)^0 
\ \cong \ 
\Stab_X(\Aut(Z))^0.
$$ 
\end{enumerate}
\end{theorem}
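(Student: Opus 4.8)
The plan is to transport the whole statement to the graded-algebra side and then invoke Proposition~\ref{prop:stab}. Since $Z$ is the toric variety attached to the chosen minimal presentation, its Cox ring is the polynomial ring $S=\KT{r}$, its divisor class group is again $K$, and its characteristic quasitorus is the same $H$; moreover the closed embedding $\b X\subseteq\b Z=\KK^r$ corresponds exactly to the relation ideal $I=\ker(\pi)$. Applying Remark~\ref{rem:X2R} to both $X$ and $Z$ gives $\Aut_H(\b Z)\cong\Aut_K(S)$ and $\Aut_H(\b X)\cong\Aut_K(R)$, compatibly with $\Stab_{\b X}(\Aut_H(\b Z))\cong\Stab_I(\Aut_K(S))$, and under these identifications the restriction $\phi\mapsto\phi|_{\b X}$ becomes precisely the surjection $\Phi$ of Proposition~\ref{prop:stab}; the same holds with $\CAut$ and $\Phi'$.

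For assertion~(i), recall from Corollary~\ref{cor:autcond} that every element of $\Aut_K(S)\cong\Aut_H(\b Z)$ is represented by a linear automorphism of $\b Z=\KK^r$ of the form (diagonal)$\times$(permutation); such a map preserves the big torus $(\KK^*)^r$ and permutes the coordinate hyperplanes. The $\Orig_S$-diagonal torus together with $H$ preserves the semistable locus $\widehat Z$, whose complement in $\b Z$ has codimension at least two, while the permutation part ranges over the finite group $\Gamma$ of Proposition~\ref{prop:exseq}. Hence the finitely many translates $\phi_\gamma(\widehat Z)$, $\gamma\in\Gamma$, are again open with codimension-$\geq2$ complement, their intersection $W$ is invariant under all of $\Aut_H(\b Z)$, and $U:=W\quot H$ is an open subset of $Z$ with codimension-$\geq2$ complement on which every map of $\Bir_2(Z)$ acts by automorphisms. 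As the generators are $K$-prime, the irreducible $\b X$ lies in no coordinate hyperplane $\{T_i=0\}$, so $\b X\cap(\KK^*)^r\neq\emptyset$; since the big torus is contained in $W$, its image in $Z$ lies in $U$ and meets $X$.

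For~(ii) and~(iii) I would run a diagram chase built on the surjectivity of $\Phi$ and $\Phi'$. A map of $\Bir_2(Z)$ stabilizes $X$ exactly when its lift to $\Aut_H(\b Z)$ stabilizes $\widehat X$, equivalently its closure $\b X$, equivalently the ideal $I$; thus $\Stab_X(\Bir_2(Z))\cong\Stab_{\b X}(\Aut_H(\b Z))/H$. Being surjective and compatible with the $H$-action, $\Phi$ descends along the sequence $1\to H\to\Aut_H(\b Z)\to\Bir_2(Z)\to1$ to a surjection $\Stab_X(\Bir_2(Z))\to\Bir_2(X)$ whose kernel is precisely the subgroup $\Cent_X(\Bir_2(Z))$ of maps acting as the identity on $X$; this is~(ii). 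Repeating the argument with $\CAut$, $\Phi'$ and the sequence $1\to H\to\CAut_H(\b Z)^0\to\Aut(Z)^0\to1$, and restricting throughout to unit components (the only range in which the $\CAut$-sequence is exact), produces a surjection $(\Stab_X(\Aut(Z)))^0\to\Aut(X)^0$ with kernel $(\Cent_X(\Aut(Z)))^0$, which is~(iii).

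Finally, the hypothesis of~(iv) that $S_{w_1}\cup\dots\cup S_{w_r}$ contains no relation of $\mathcal R(X)$ means exactly that $I_{w_i}=0$ for every generator degree $w_i=\deg(T_i)$, so by Proposition~\ref{prop:stab}(ii) both $\Phi$ and $\Phi'$ are isomorphisms. Their descents are then isomorphisms as well, forcing $\Cent_X(\Bir_2(Z))=1$ and $(\Cent_X(\Aut(Z)))^0=1$, whence $\Bir_2(X)\cong\Stab_X(\Bir_2(Z))$ and $\Aut(X)^0\cong\Stab_X(\Aut(Z))^0$. I expect the genuine difficulty to lie not in this formal part but in assertion~(i) together with the lifting correspondence used in~(ii)--(iii): one must handle the codimension-two discrepancies carefully enough to be sure that stabilizing $X$ birationally in $\Bir_2(Z)$ really amounts to preserving the relation ideal $I$ under $\Aut_H(\b Z)$, and not merely up to the indeterminacy loci of the birational maps involved.
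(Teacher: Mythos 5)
Your overall strategy is the one the paper uses: translate everything to the graded side via Remark~\ref{rem:X2R}, so that restriction of equivariant automorphisms becomes the surjection $\Phi$ (resp.\ $\Phi'$) of Proposition~\ref{prop:stab}, then descend modulo $H$ along the exact sequences from~\cite{ArHaHeLi} quoted before the theorem, and finally get~(iv) from Proposition~\ref{prop:stab}~(ii). Your treatment of (ii) and (iv) is exactly the paper's diagram chase, and your closing caveat (that one must check that stabilizing $X$ in $\Bir_2(Z)$ corresponds to stabilizing the ideal $I$) is precisely the content packaged in that diagram.

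The one genuine divergence is~(i). The paper does not reprove this: it cites the proof of \cite[Thm.~2.1]{ArHaHeLi}, where $U$ is taken to be the image in $Z$ of the intersection of \emph{all} nonempty semistable loci of the $H$-action on $\b Z$, and the needed properties are established there. You instead build $W$ as the intersection of the finitely many $\Gamma$-translates $\phi_\gamma(\widehat Z)$ using the diagonal-times-permutation structure from Corollary~\ref{cor:autcond}. This is in the spirit of the cited argument, and your points about the big torus and $K$-primeness (giving $U \cap X \neq \emptyset$) are fine, but your sketch skips the real technical content: why $H$-equivariant automorphisms of the invariant open set $W$ descend to honest automorphisms of $U = W \quot H$. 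Since $W$ is an intersection of semistable loci of \emph{different} classes $\psi_\gamma(w)$, it need not be saturated with respect to the quotient map $\widehat Z \to Z$, so one must argue that the various GIT quotients agree on $W$ (equivalently, that $W$ is saturated in each $\widehat Z(\psi_\gamma(w))$); this is what the reference actually proves and what your construction would have to supply to be self-contained. A second, smaller imprecision: in~(iii) the kernel of the descended surjection $\Stab_X(\Aut(Z))^0 \to \Aut(X)^0$ is $\Cent_X(\Aut(Z)) \cap \Stab_X(\Aut(Z))^0$, not $(\Cent_X(\Aut(Z)))^0$ in general; this discrepancy is exactly why the theorem is stated with $\bigl(\Stab_X(\Aut(Z))/\Cent_X(\Aut(Z))\bigr)^0$ rather than as a quotient of unit components.
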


\begin{proof}
For~(i) recall from the proof of~\cite[Thm.~2.1]{ArHaHeLi}
that the image $U \subseteq Z$ under $\widehat Z \to Z$ of 
the intersection over all possible nonempty sets of 
semistable points of the $H$-action on $Z$ is a nonempty 
open set, where all elements of $\Bir_2(Z)$ define 
automorphisms. By this construction, we have 
$U \cap X \ne \emptyset$. 

Assertion~(ii) is proved by relating the groups 
$\Bir_2(Z)$ and $\Bir_2(X)$ to each other via 
the following commutative diagram, where all 
vertical and horizontal sequences are exact: 
$$ 
\xymatrix{
1 
\ar[d]
&
1 
\ar[d]
&
\ar[d]
1
&
\\
H 
\ar@{}[r]|=
\ar[d]
&
H 
\ar@{}[r]|=
\ar[d]
&
H
\ar[d]
&
\\
{\Aut_H(\b{Z}})
\ar@{}[r]|{\supseteq\qquad}
\ar[d]
&
{\Stab_{\b{X}}(\Aut_H(\b Z))}
\ar[r]
\ar[d]
&
{\Aut_H(\b X)}
\ar[r]
\ar[d]
&
1
\\
{\Bir_2}(Z)
\ar@{}[r]|{\supseteq\qquad}
\ar[d]
&
{\Stab_X(\Bir_2(Z))}
\ar[r]
\ar[d]
&
{{\Bir_2(X)}}
\ar[r]
\ar[d]
&
1
\\
1
&
1
&
1
&
}
$$
To obtain the diagram, we combined 
Proposition~\ref{prop:stab} via Remark~\ref{rem:X2R}
with the upper horizontal sequence 
of~\cite[Thm.~2.1]{ArHaHeLi} stated before.

Assertion~(iii) is proved in the same manner as~(ii), 
one just uses the vertical sequence involving $\CAut_K(R)$ 
of Proposition~\ref{prop:stab} 
and the sequence of~\cite[Cor.~2.8]{ArHaHeLi} given 
above.
Assertion~(iv) is then a direct application of 
Proposition~\ref{prop:stab}~(ii).
\end{proof}

It seems that details on the full automorphism 
group $\Aut(X)$ are not directly visible from 
the picture developed above.
Even in the more accessible special case 
of $X$ having a torus action of complexity 
one, where for example the root subgroups 
can be determined explicitly~\cite{ArHaHeLi},
we do not know how to recognize instantaneously 
the dimension or the number of its connected 
components.

However, we may study $\Aut(X)$ by means 
of the algorithms developed in  the 
preceding sections. 
As in Section~\ref{sec:symalg}, we fix 
a minimal presentation $R=S/I$
with $S=\KT{r}$ for the $K$-graded 
Cox ring $R$ of $X$.
As mentioned before, $\widehat X \subseteq \b X$ 
is the set of semistable points of 
an ample class $w = [D]$ in $K = \Cl(X)$.
In fact, the ample cone of $X$ is the relative 
interior of the \emph{GIT-cone} 
$\lambda \subseteq K_\QQ$ 
determined by $w$, see, e.g.,~\cite{ArDeHaLa, BeHa}.

\begin{algorithm}[Compute $\aut_H(\widehat X)$]
\label{algo:autwidehatX}
\emph{Input:} the $K$-graded Cox ring $R=S/I$ and an
ample class $w \in K$ of a Mori dream space~$X$.
\begin{itemize}
\item 
Compute the GIT-cone $\lambda \subseteq K_\QQ$ 
of $w$, e.g., using~\cite{Ke}.
\item
Compute the subgroup 
$\Sigma \subseteq \Aut(\Orig_S)$ 
stabilizing $\lambda \subseteq K_\QQ$.
\item 
Run Algorithm~\ref{algo:quotrep} 
with input $S$, $I$ and $\Sigma$.
\end{itemize}
\emph{Output:} 
the ideal $J''\subseteq \KK[T_{ij};\,1\leq i,j\leq k]$
computed by Algorithm~\ref{algo:quotrep}.
It describes $\aut_H(\widehat X)\subseteq \GL(k)$. 
\end{algorithm}

\begin{proof}
The subgroup $\aut_H(\widehat X) \subseteq 
\aut_H(\b X)$ is the stabilizer of the subset
$\widehat X\subseteq \b X$. 
By definition of the set $\widehat X$ 
of semistable points associated with 
the ample class $w$,
the graded algebra automorphisms 
defining elements of $\aut_H(\widehat X)$ 
are precisely the $(\varphi,\psi) \in \Aut_K(R)$
such that $\psi$ stabilizes the GIT-cone $\lambda$.
\end{proof}

Passing from $\aut_H(\widehat X)$ to 
$\Aut(X)$ involves an invariant ring 
computation, which in our setting 
amounts to compute a degree zero 
\emph{Veronese subalgebra}. 
Recall that for a $K$-graded $\KK$-algebra
$R$ and a subgroup $K' \subseteq K$, 
the associated Veronese subalgebra is
$$ 
R(K') 
\ := \ 
\bigoplus_{w \in K'} R_w
\ \subseteq \ 
\bigoplus_{w \in K} R_w
\ = \
R.
$$
For any graded presentation $R = S/I$ with 
$S = \KT{r}$, the \emph{degree map} is the 
linear homomorphism $\delta \colon \ZZ^r \to K$ 
with $\delta(e_i) = \deg(T_i) \in K$.

\begin{algorithm}[Computing Veronese subalgebras]
\label{algo:veronese}
\emph{Input:} 
a $K$-graded $\KK$-algebra $R=S/I$ where $S=\KT{r}$ 
and a subgroup $K' \subseteq K$.
\begin{itemize}
\item
Compute generators $\mu_1,\ldots,\mu_s$ for the 
monoid $\delta^{-1}(K') \cap \ZZ_{\ge 0}^r$.
\item 
Compute the preimage $\Psi^{-1}(I)$ under 
$\Psi \colon \KK[Y_1,\ldots,Y_s] \to S$, $Y_j \mapsto T^{\mu_j}$.
 \end{itemize}
\emph{Output: }
the ideal $\Psi^{-1}(I) \subseteq \KK[Y_1,\ldots,Y_m]$.
Then $R(K_0)$ is isomorphic to the $K'$-graded $\KK$-algebra $\KK[Y_1,\ldots,Y_m]/\Psi^{-1}(I)$. 
\end{algorithm}

\begin{remark}
The first step of Algorithm~\ref{algo:veronese} can be carried 
out by a Hilbert basis computation for a pointed, convex, rational
cone.
\end{remark}

In order to apply Algorithm~\ref{algo:veronese},
we have to identify the characteristic quasitorus 
$H = \Spec \, \KK[K]$ of $X$ inside 
$\aut_H(\widehat X)$ in terms of the description 
as a subgroup of $\GL(k)$ as provided by 
Algorithm~\ref{algo:autwidehatX}.
The key observation for this is the following.

\begin{remark}
\label{rem:grading}
Consider the monomial basis 
$\BBB=(b_1,\ldots,b_n)$ 
of the vector subspace $V \subseteq R$ 
as in Construction~\ref{con:repres}.
Setting $\deg(T_{ij}) := u_j := \deg(b_j) \in K$,
we define a $K$-grading on 
$$
\mathcal{O}(\GL(n))
\ = \ 
\KK[T_{ij}; \; 1 \le i,j \le n]_{\det}.
$$
Observe that $\det$ is $K$-homogeneous.
The action of $h \in H$ on $A \in \GL(n)$
is then given by multiplying from the right 
to $A$ the diagonal matrix 
$$
\diag(\chi^{u_1}(h), \ldots, \chi^{u_n}(h)).
$$ 
The ideals provided by Algorithms~\ref{algo:autks}
and~\ref{algo:stab} are $K$-homogeneous and,
finally, the $K$-grading of $\mathcal{O}(\GL(n))$ 
induces a $K$-grading of $\mathcal{O}(\GL(k))$,
where $\GL(k)$ is the ambient group of 
$\Aut_{K,\Sigma}(R)$ provided by 
Algorithm~\ref{algo:quotrep},
which in turn represents $H$ as a subgroup 
of $\aut_H(\widehat X)$.
\end{remark}

\begin{algorithm}[Compute the Hopf algebra $\OO(\Aut(X))$]
\label{algo:autx}
\emph{Input:} 
the $K$-graded Cox ring $R=S/I$
and an ample class $w\in K$
of a Mori dream space~$X$.
\begin{itemize}
\item
Compute with Algorithm~\ref{algo:autwidehatX}
the ideal 
$J'' \subseteq S':=\KK[T_{ij};\,1\leq i,j\leq k]$ 
of $\Aut_H(\widehat X)\subseteq \GL(k)$.
\item 
Install the $K$-grading from Remark~\ref{rem:grading} 
on~$S'$ and compute with Algorithm~\ref{algo:veronese} 
a presentation $(S'/J'')(0) = \KK[Y_1,\ldots,Y_m]/ J$.
\end{itemize}
\emph{Output:}
$\KK[Y_1,\ldots,Y_m]/J$. This is the Hopf algebra 
$\mathcal{O}(\Aut(X))$ of $\Aut(X)$.
\end{algorithm}

\begin{proof}
From the previous discussion, we have 
$\Aut(X)\cong \Aut_H(\widehat X)/H$.
To mod out the subgroup $H$, one passes to the 
degree zero Veronese subalgebra of the  
$K$-grading of $\mathcal{O}(\Aut_H(\widehat X))$
established in Remark~\ref{rem:grading}.
\end{proof}

\begin{remark}
In Algorithm~\ref{algo:autx}, the Hopf algebra structure 
on $\mathcal{O}(\Aut(X))$ is inherited from that of 
$\GL(k)$ via inclusion and restriction
$$
\mathcal{O}(\GL(k))
\ \to\ 
\mathcal{O}\left(\Aut_H(\widehat X)\right)
\ \supseteq\ 
\mathcal{O}(\Aut(X)).
$$
\end{remark}

\begin{example}[$A_32A_1$-singular Gorenstein log del Pezzo $\KK^*$-surface II]
\label{ex:running3}
We continue Example~\ref{ex:running1}.
Since $X$ is a surface, 
there is exactly one GIT-cone $\lambda(w)$ within $\Mov(X)$;
we can choose $w := (2,1)\in K_\QQ$ and obtain
\begin{center}
\begin{minipage}{5cm}
\begin{eqnarray*}
\Aut_H\bigl(\widehat X\bigr)
&=&
\Aut_H(\b X)\\
&\cong&
\Aut_K(R)\\
&=&
\ZZ/2\ZZ \ltimes (\ZZ/2\ZZ \times (\KK^*)^3).
\end{eqnarray*}
\end{minipage}
\qquad 
\begin{minipage}{4cm}
\begin{center}
\footnotesize
\begin{tikzpicture}[scale=.75]

\fill[color=black!45] (0,0) -- (1,1) -- (1,-1) -- cycle;
\fill[color=black!25!white] (0,0) -- (1,0) -- (1,1) -- cycle;

\draw[->, dashed] (-.5,0) -- (1.4,0);
\draw[->, dashed] (0,-1.13) -- (0,1.13);

\draw[thick] (0,0) -- (1,1);
\draw[thick] (0,0) -- (1,0);
\draw[thick] (0,0) -- (1,-1);
\draw[line width=1pt, color=black] (0,0) -- (1.95,.95) node[anchor=north]{$w$};

\fill[color=black] (1,1) circle (1.6pt) node[anchor=west]{$w_1, w_5$};
\fill[color=black] (1,0) circle (1.6pt) node[anchor=north west]{$w_3, w_4$};
\fill[color=black] (1,-1) circle (1.6pt) node[anchor=west]{$w_2$};

\draw[color=black!80!white,decorate,decoration={brace,amplitude=7pt},rotate=0] (2.4,1) -- (2.4,0);
\draw[color=black] (2.5,.5) node[anchor=west]{$\ \  \lambda(w)$};

\end{tikzpicture}
\end{center}

\end{minipage}
\end{center}
We can then compute $\Aut(X)$ with Algorithm~\ref{algo:autx}.
As an affine variety, it is given by
\begingroup
\footnotesize
\begin{gather*}
\Aut(X)\ \cong\ 
V(\KK^{16};\ 
T_{16},
T_{14}+T_{15}-1,
T_{11},
T_{10},
T_{7},
T_{5},
T_{4}-T_{6}+T_{12},
T_{3}-T_{8}-T_{9},
\\\qquad\qquad\qquad
T_{2}-T_{9},
T_{1}-T_{8},
T_{15}^2-T_{15},
T_{12}T_{15}-T_{12},
T_{9}T_{15},
T_{8}T_{15}-T_{8},
\\\qquad\qquad\qquad
T_{6}T_{15}-T_{12},
T_{8}T_{12}-T_{15},
T_{6}T_{9}+T_{15}-1,
T_{9}^2T_{13}-T_{6}^2+T_{12}^2,
\\\qquad\qquad\qquad
T_{8}^2T_{13}-T_{12}^2
),
\end{gather*}
\endgroup
As a group, one verifies that 
$\Aut(X)$ is isomorphic to $\ZZ/2\ZZ \ltimes \KK^*$.
The four components of $\Aut_H(\widehat X)$ are mapped 
onto the two components of $\Aut(X)$, which in turn 
are given by
\begin{center}
\tiny
\begin{minipage}{4.5cm}
\begin{gather*}
V(\KK^{16};\,
T_{16},
T_{15}-1,
T_{14},
T_{11},
\\
T_{10},
T_{9},
T_{7},
T_{6}-T_{12},
T_{5},
\\
T_{4},
T_{3}-T_{8},
T_{2},
T_{1}-T_{8},
\\
T_{8}T_{12}-1,
T_{8}^2T_{13}-T_{12}^2
).
\end{gather*}
\end{minipage}
\qquad 
\begin{minipage}{4.5cm}
\begin{gather*}
V(\KK^{16};\,
T_{16},
T_{15},
T_{14}-1,
T_{12},
T_{11},
\\
T_{10},
T_{8},
T_{7},
T_{5},
T_{4}-T_{6},
\\
T_{3}-T_{9},
T_{2}-T_{9},
T_{1},
\\
T_{6}T_{9}-1,
T_{9}^2T_{13}-T_{6}^2
)
\end{gather*}
\end{minipage}
\end{center}
\end{example}

\section{Applications}
\label{sec:cubic}

We discuss three applications.
In the first one, we consider the blow-up 
of a projective space $\PP^3$ in a special 
configuration of six points; recall that 
the two-dimensional analogue leads to 
generalized del Pezzo surfaces.
We check if our variety admits an effective 
two-torus action, as a naive glimpse at the 
defining equations of its Cox ring could 
suggest.

\begin{example}
\label{ex:detectcplx1}
Consider the blow up $X\to \PP^3$
along the points $[1, 0, 0, 1]$, $[0, 1, 1, 0]$ and the four standard
toric fixed points.
According to~\cite[Thm.~7.1(iv)]{HaKeLa}, 
the Cox ring $R$ of $X$ is given by

 \begin{center}
 \begin{minipage}{3cm}
  \begingroup
 \footnotesize
  \begin{gather*}
  R\ =\ \KT{12}/I\\
  \text{ where $I$ is generated by}\\
  T_{3}T_{8}-T_{5}T_{12}-T_{2}T_{9},\\
  T_{4}T_{7}-T_{6}T_{11}-T_{1}T_{10}.
\end{gather*}
\endgroup
 \end{minipage}
\qquad\
\begin{minipage}{6cm}
  \begin{gather*}
 \left[
 \mbox{\tiny $
  \begin{array}{rrrrrrrrrrrr}
    1 & 1 & 1 & 1 & 1 & 1 & 0 & 0 & 0 & 0 & 0 & 0\\
  0 & -1 & -1 & -1 & -1 & 0 & 1 & 0 & 0 & 0 & 0 & 0\\
  0 & 1 & 0 & 0 & 1 & 0 & 0 & 1 & 0 & 0 & 0 & 0\\
  0 & 0 & 1 & 0 & 1 & 0 & 0 & 0 & 1 & 0 & 0 & 0\\
  -1 & -1 & -1 & 0 & -1 & 0 & 0 & 0 & 0 & 1 & 0 & 0\\
  0 & -1 & -1 & 0 & -1 & -1 & 0 & 0 & 0 & 0 & 1 & 0\\
  0 & 1 & 1 & 0 & 0 & 0 & 0 & 0 & 0 & 0 & 0 & 1
  \end{array}
  $}
  \right]
 \end{gather*}
 \end{minipage}
 \end{center}
and the columns of the matrix are the degrees
$\deg(T_i)\in K:=\ZZ^7$ of the variables $T_i$.
Using Algorithm~\ref{algo:stab}, we obtain that $\Aut_K(R)$ 
is of dimension $8$.
Thus $\Aut(X)$ is one-dimensional and
cannot contain a two-dimensional torus. 
\end{example}

The second application concerns
singular cubic surfaces $X\subseteq \PP^3$ 
with at most ADE-singularities.
There are~$20$ singularity types of such surfaces; 
in some types, there occur infinite families
due to the existence of parameters in the
equation describing the surface in~$\PP^3$; 
we refer to~\cite[Sec.~8, 9]{Do} for details.
The Cox rings of singular cubic surfaces with  
at most ADE singularities have been determined 
in~\cite{DeHaHeKeLa, De}.
So far, the automorphism groups have been 
computed in the cases without parameters~\cite{Sa}.

\begin{theorem}
\label{thm:cubic}
Let $X \subseteq \PP^3$ be a
singular cubic surface
with at most ADE singularities
and parameters in its defining
equations.
Depending on the ADE singularity
type $S(X)$,
the automorphism group $\aut(X)$
for a general choice of parameters
is the following.

\begingroup
\footnotesize
\begin{longtable}{cccccccc}
\myhline
$S(X)$ & $A_3$ & $A_2A_1$ & $2A_2$  & $3A_1$ & $A_2$ & $2A_1$ & $A_1$  \\
\hline 
$\Aut(X)$ & $\ZZ/2\ZZ$ & $\{1\}$ & $\KK^*\rtimes \ZZ/2\ZZ$ & $S_3$ &
$\{1\}$ & $\ZZ/2\ZZ$ & $\{1\}$\\
\myhline
\end{longtable}
\endgroup
\end{theorem}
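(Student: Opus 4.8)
The plan is to apply the pipeline of Algorithm~\ref{algo:autx} to each of the seven singularity types separately, starting from the explicit presentations of the Cox rings provided in \cite{DeHaHeKeLa, De}. For a fixed type $S(X)$ I take the given $\Cl(X)$-graded presentation $\cox(X) = S/I$, where $S = \KT{r}$ and the generators of $I$ carry the parameters $s = (s_1, \ldots, s_k)$ occurring in the defining cubic, fix an ample class $w \in \Cl(X)$, and run Algorithm~\ref{algo:autwidehatX} followed by the Veronese step of Algorithm~\ref{algo:autx}. The output is a describing ideal of $\Aut(X) \cong \Aut_H(\widehat X)/H$ as a subgroup of a suitable $\GL(k)$, from which the abstract group structure is read off.

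The essential point is how the parameters enter. The combinatorial input of the algorithms --- the generator degrees $\Orig_S$, the symmetry group $\Aut(\Orig_S)$ of Remark~\ref{rem:autorig}, the ambient group $\Aut_K(S) \subseteq \GL(n)$ of Corollary~\ref{cor:autcond}, and the GIT-cone $\lambda$ of the chosen ample class --- does not depend on the parameters~$s$, as it only sees the grading. The parameters enter solely through the relations, hence only through the stabilizer condition $g \cdot I_W = I_W$ of Proposition~\ref{prop:stabpres} realised in Algorithm~\ref{algo:stab}. I therefore carry out the whole pipeline with $s$ kept as indeterminates; the resulting describing ideal of $\Aut_K(R)$, and after the Veronese step that of $\Aut(X)$, then capture the generic member of the family. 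By constructibility of the isomorphism type in a family of algebraic groups, $\Aut(X_s)$ is constant on a dense open subset of parameter space and equals this generic value, whereas on the complementary special loci additional symmetries may force the group to jump --- which is precisely why the theorem restricts to a general choice of parameters.

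The two steps I expect to cost the most effort are the verification that the hypotheses of Section~\ref{sec:symalg} genuinely hold for general~$s$, and the identification of the resulting groups. For the former, one must check that the listed presentation stays minimal with pairwise $\Cl(X)$-prime homogeneous generators for generic parameters, so that Construction~\ref{con:repres} and Propositions~\ref{prop:stab} and~\ref{prop:autfinitepres} apply verbatim. For the latter, the delicate point is to separate the symmetries persisting for every admissible~$s$ from those surviving only on proper subvarieties of parameter space: concretely, the stabilizer equations split into a parameter-free part, which yields the tabulated group, and parameter-dependent constraints that become vacuous for general~$s$. The transition $\Aut_H(\widehat X) = \Aut_K(R) \to \Aut(X)$ obtained by dividing out $H$ through Algorithm~\ref{algo:veronese} is where a positive-dimensional graded automorphism group can collapse to a finite one; this explains why several types give a finite or trivial $\Aut(X)$ despite a larger $\Aut_K(R)$, and why only the $2A_2$ type retains a one-dimensional torus, reflecting a residual $\KK^*$-action on the surface.

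Finally, I would record the computation type by type in the manner of Example~\ref{ex:running3}, reading $\dim(\Aut(X))$ and the number of its connected components off the output ideal with the standard Gr\"obner basis routines and then matching the resulting affine group scheme to its abstract form ($\ZZ/2\ZZ$, $\{1\}$, $\KK^* \rtimes \ZZ/2\ZZ$, $S_3$). A useful consistency check is provided by the parameter-free degenerations treated in~\cite{Sa}: specialising our generic answer must be compatible with the typically larger automorphism groups appearing there.
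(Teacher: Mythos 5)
Your overall strategy---feed the parametric Cox ring presentations from \cite{DeHaHeKeLa,De} into the paper's algorithmic machinery, keep the parameters as indeterminates, and identify the generic group case by case---is the paper's strategy, but your tactics differ from the paper's actual proof in a way worth recording. For six of the seven types ($A_3$, $A_2A_1$, $3A_1$, $A_2$, $2A_1$, $A_1$) the paper never runs the full pipeline of Algorithm~\ref{algo:autx}: instead it computes $\CAut_K(R)\subseteq\GL(k)$ via Algorithm~\ref{algo:quotrep}, computes the ideal of the characteristic quasitorus $H$ via Remark~\ref{rem:Hideal}, and verifies the equality $H=\CAut_K(R)$ by comparing ideals. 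The exact sequence of Proposition~\ref{prop:exseq} then gives $\Aut(X)\cong\Aut_K(R)/H\cong\Gamma$, a finite group obtained purely combinatorially from Remark~\ref{rem:cautgamma}---no Veronese (invariant-ring) computation and no need to recognize an abstract group from a Hopf-algebra presentation. For the one positive-dimensional case $2A_2$, the paper again avoids the Veronese step: it writes down the describing ideal of $\Aut_K(R)\subseteq\GL(7)$ with the parameter $a$ kept in the equations, and then constructs explicit homomorphisms $\alpha$, $\beta$, $\gamma$ giving a split exact sequence, whence $\Aut(X)\cong\Aut_K(R)/H\cong\KK^*\rtimes\ZZ/2\ZZ$. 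Your route---Algorithm~\ref{algo:autwidehatX} followed by Algorithm~\ref{algo:veronese} in every case, as in Example~\ref{ex:running3}---is viable, but it concentrates all the difficulty in your final step ``matching the resulting affine group scheme to its abstract form'': dimension and component count alone do not distinguish, say, $\KK^*\rtimes\ZZ/2\ZZ$ from $\KK^*\times\ZZ/2\ZZ$, so for $2A_2$ you would still need something like the paper's explicit splitting. The paper's shortcuts buy a cheaper computation and a cleaner group identification; your route buys uniformity (one algorithm for all cases) at the cost of heavier Gr\"obner computations and a harder recognition problem.

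Two points in your plan need tightening. First, ``constructibility of the isomorphism type in a family of algebraic groups'' is not a citable theorem; the rigorous substitute is that a Gr\"obner basis computed with symbolic parameters specializes to a Gr\"obner basis off a proper closed subset of the parameter space, so the symbolic describing ideal is correct for general $s$, and the group identification must then be carried out with the parameters still present in the equations---exactly as the paper does in the $2A_2$ case, where the analysis of the ideal $J'$ is uniform in $a\in\KK^*\setminus\{1\}$. Second, your concern about minimality of the presentations for general $s$ is legitimate but is settled by the sources: the paper takes the families of \cite{DeHaHeKeLa}, contracts the $(-2)$-curves via \cite{HaKeLa}, and these presentations serve as minimal presentations for all admissible parameter values, so Propositions~\ref{prop:stab} and~\ref{prop:autfinitepres} apply directly.
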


\begin{remark}
\label{rem:Hideal}
Consider the output of Algorithm~\ref{algo:quotrep}.
The ideal describing $H = \Spec\,\KK[K]$ inside 
$\GL(k)$ is obtained as follows.
Let $Q$ denote the degree matrix of the 
induced $K$-grading of $\mathcal{O}(\GL(k))$ 
from Remark~\ref{rem:grading} and 
compute a Gale dual for $Q$, i.e., a matrix 
$P$ fitting into
\[
\xymatrix{
0
&
\ar[l]
K
&
\ar[l]^Q
\ZZ^{k^2}
&
\ar[l]^{P^*}
\ker(Q)
&
\ar[l]
0
}.
\]
Next compute the lattice ideal 
$I(P)\subseteq \KK[T_{ij};\ 1\leq i,j\leq k]$ associated 
with $P$ through $E_k\in \GL(k)$, see e.g.~\cite[Rem.~5.5]{HaKeLa}.
Then $I(P)$ describes $H \subseteq \CAut_K(R)$ inside  
$\GL(k)$.
\end{remark}

\begin{proof}[Proof of Theorem~\ref{thm:cubic}]
The families of Cox rings of the resolutions of 
cubic surfaces have been determined in~\cite{DeHaHeKeLa}. 
The Cox rings of the cubic surfaces are obtained 
by contracting all $(-2)$-curves according 
to~\cite[Prop.~2.1]{HaKeLa}, respectively.
The cases involving parameters have the singularity 
types listed in the table. 
Write $R:=\Cox(X)$ and $K:=\Cl(X)$ for the 
respective cubic surface~$X$.

\emph{Cases $A_3$, $A_2A_1$, $3A_1$, $A_2$, $2A_1$ and $A_1$:}
In each case, we use Algorithm~\ref{algo:quotrep} to compute 
the ideal describing $\CAut_K(R) \subseteq \GL(k)$.
Remark~\ref{rem:Hideal} delivers the ideal describing 
the inclusion $H \subseteq \GL(k)$ of the 
characteristic quasitorus $H$ of $X$. 
Comparing ideals, we verify that $H = \CAut_K(R)$ holds.
Therefore, $\Aut(X)$ is isomorphic to $\Gamma = \Aut_K(R)/H$.
The group $\Gamma$ is obtained using 
Remark~\ref{rem:cautgamma}.

\textit{Case $2A_2$: }
The characteristic quasitorus $H$ is isomorphic
to $(\KK^*)^3$ and we have $R=S/I$ with $S=\KT{7}$ and
an ideal~$I\subseteq S$.
In the setting of Construction~\ref{con:repres},
we have $\BBB = (T_1,\ldots,T_7)$ and $\Omega_S=\{w_1,\ldots,w_7\}$
with $w_i:=\deg(T_i)$.
An application of Algorithm~\ref{algo:autks} shows that
$\Aut_K(S)$ consists of the matrices $A,AB\in \GL(7)$,
where $A$ is a $\Omega_S$-diagonal matrix and $B$ 
a $\Aut(\Omega_S)$-permuting matrix as follows:
\begin{gather}
A\,=\,\diag(*,\ldots,*)\ \in\ \GL(7),\qquad
B\,=\,
 \left[
\mbox{\tiny $
    \begin{array}{rrrrrrr}
    0 & 0 & 0 & 1 & 0 & 0 & 0 \\
    0 & 0 & 1 & 0 & 0 & 0 & 0 \\
    0 & 1 & 0 & 0 & 0 & 0 & 0 \\
    1 & 0 & 0 & 0 & 0 & 0 & 0 \\
    0 & 0 & 0 & 0 & 1 & 0 & 0 \\
    0 & 0 & 0 & 0 & 0 & 0 & 1 \\
    0 & 0 & 0 & 0 & 0 & 1 & 0
    \end{array}
    $}
\right]\ \in\ \GL(7).
\label{eq:cubic}
\end{gather}
Using Algorithm~\ref{algo:stab}, we compute an ideal 
$J\subseteq \KK[T_{i,j};\,1\leq i\leq 7]$ 
describing $\Aut_K(R)\subseteq \GL(7)$; it is given by
\begin{gather*}
J \,=\, J'\cdot B^*J'\quad\text{with}\\
J'\,=\,
\left\<
\mbox{\tiny $
\begin{array}{ll}
T_{2,2}T_{3,3}-T_{1,1}T_{4,4},&
-aT_{5,5}^2+aT_{6,6}T_{7,7},\\
(a-1)T_{1,1}T_{4,4}+(-a+1)T_{5,5}^2, & 
T_{2,2}T_{3,3}-T_{6,6}T_{7,7},\\
\end{array}
$}
\right\>
+ \<T_{i,j};\, i\ne j\>,
\end{gather*}
where $a\in \KK^*\setminus \{1\}$ is a parameter coming 
from the ideal of $X\subseteq \PP^3$.
Since $B$ is $\Aut(\Orig_S)$-permuting, there is 
$\sigma_B\in \Aut(\Orig_S)$ such that $\deg(T_{j,j}) = \sigma_B(\deg(T_{i,i}))$.
By the previous decomposition of $\Aut_K(S)$, 
we obtain a homomorphism
$$
\alpha\colon \Aut_K(R) \ \to \ \Gamma\,\cong\,\ZZ/2\ZZ,
\qquad
B'A'\,\mapsto\,
\sigma_{B'}.
$$
Moreover, we have a homomorphism 
$\beta\colon \KK^*\times H \to \Aut_K(R)$
that maps a tuple $(v,s,t,u)$ to the product 
$A_v\cdot B_s\cdot C_t\cdot D_u$ where
\begin{gather*}
A_v:=\diag(v^{-1},1,1,v,1,1,1),\qquad
B_s:=\diag(s^{-1},s,s^{-1},s,1,1,1),\\
C_t:=\diag(t^{-2},t,t^{-3},1,t^{-1},t^{-2},1),\qquad
D_u:=\diag(1,u,u^{-1},1,1,u^{-1},u).
\end{gather*}
One directly checks that $\beta$ is well-defined.
Comparing entries in $A_v\cdot B_s\cdot C_t\cdot D_u$, 
we see that $\beta$ is injective.
We thus have a split exact sequence
$$
\xymatrix{
1
\ar[r]
&
\KK^*\times H
\ar[r]^{\beta}
&
\Aut_K(R)
\ar[r]^{\alpha}
&
\Gamma
\ar[r]
\ar@/^1pc/[l]^\gamma
&
1
},
$$
where we define the homomorphism 
$\gamma\colon \Gamma\to \Aut_K(R)$  
by mapping $\id$ to the $7\times 7$-unit matrix and 
$\sigma_B$ with $B$ as in~\eqref{eq:cubic}
to the element $BA'\in \Aut_K(R)$,
where $A':=\diag(1,\ldots,1)$.
Thus, we have a semidirect product representation for 
$\Aut_K(R)$ which in turn descends to 
\[
\Aut(X)\ \cong\ \Aut_K(R)/H\ \cong\  \KK^*\rtimes \ZZ/2\ZZ.
\qedhere
\]
\end{proof}

\begin{remark}
Note that passing to special parameters for the surfaces 
of Theorem~\ref{thm:cubic} may lead to larger 
automorphism groups.
Whereas Algorithms~\ref{algo:stab} and~\ref{algo:autx}  
return equations including parameters, the description of 
the group needs to be done anew.
As an example of this effect, consider the $\ZZ^7$-graded 
rings 
\begin{gather*}
R_\lambda
\ := \
\KT{10}
/
\<T_{2}T_{5}^2T_{8}+T_{3}T_{6}^2T_{9}+T_{4}T_{7}^2T_{10}
-\lambda T_{1}T_{2}T_{3}T_{4}T_{5}T_{6}T_{7}\>,
\\
\left[
\mbox{\tiny$\begin{array}{rrrrrrrrrr}
  1 & 0 & 0 & 0 & 0 & 0 & 0 & 1 & 1 & 1 \\
  -1 & 1 & 0 & 0 & 0 & 0 & 0 & -1 & 0 & 0 \\
  -1 & 0 & 1 & 0 & 0 & 0 & 0 & 0 & -1 & 0 \\
  -1 & 0 & 0 & 1 & 0 & 0 & 0 & 0 & 0 & -1 \\
  0 & -1 & 0 & 0 & 1 & 0 & 0 & -1 & 0 & 0 \\
  0 & 0 & -1 & 0 & 0 & 1 & 0 & 0 & -1 & 0 \\
  0 & 0 & 0 & -1 & 0 & 0 & 1 & 0 & 0 & -1
\end{array}
$}
\right]
\end{gather*}
where $\lambda\in \{0,1\}$ and the columns of the matrix 
are the degrees $\deg(T_i)\in \ZZ^7$.
The rings $R_\lambda$ are the Cox rings $\Cox(X_\lambda)$ of the minimal resolutions $X_\lambda\to X_\lambda'$ of
cubic surfaces $X_\lambda'$ with singularity type 
$D_4$, see~\cite[case $D_4$, p.~28]{De}. 
One has $X_\lambda \cong X_1$ for $\lambda \ne 0$ 
and from~\cite[Thm.~2]{Sa}, we infer
$$
\Aut(X_1) \ \cong \ S_3,
\qquad
\Aut(X_0) \ \cong \ \KK^*\rtimes S_3.
$$
Observe that Proposition~\ref{prop:autbound} 
gives the bound $\dim(\Aut(X_\lambda)) \le 3$,
which is not sharp in the present case.
\end{remark}

Our third application is in computer algebra:
Algorithm~\ref{algo:stab} is a linear algebra-based, Gr\"obner
basis-free way to detect symmetries of graded ideals. These symmetries
can then be used to speed up computations on ideals.

\begin{example}\label{ex:groebnerfan}
The affine cone over the Grassmannian $G(2,5)$ has the coordinate ring
$R=S/I$ where $S=\KT{10}$ and $I$ is generated by the Pl\"ucker relations
 \begingroup
 \footnotesize
  \begin{gather*}
 T_{5}T_{10}-T_{6}T_{9}+T_{7}T_{8},\qquad
  T_{1}T_{9}-T_{2}T_{7}+T_{4}T_{5},\qquad
  T_{1}T_{8}-T_{2}T_{6}+T_{3}T_{5},\\
  T_{1}T_{10}-T_{3}T_{7}+T_{4}T_{6},\qquad
  T_{2}T_{10}-T_{3}T_{9}+T_{4}T_{8}.
\end{gather*}
\endgroup
We use that $R$ is effectively and pointedly graded by $\ZZ^5$:
the degrees of the variables $T_i$ are the columns of
 \begin{gather*}
 \left[
 \mbox{\tiny $
  \begin{array}{rrrrrrrrrr}
  1 & 1 & 1 & 1 & 0 & 0 & 0 & 0 & 0 & 0 \\
  1 & 0 & 0 & 0 & 1 & 1 & 1 & 0 & 0 & 0 \\
  0 & 1 & 1 & 0 & 0 & 0 & -1 & 1 & 0 & 0 \\
  0 & 1 & 0 & 1 & 0 & -1 & 0 & 0 & 1 & 0 \\
  0 & 0 & 1 & 1 & -1 & 0 & 0 & 0 & 0 & 1
  \end{array}
  $}
  \right].
 \end{gather*}
Applying Algorithm~\ref{algo:stab} to $R$, 
we obtain $\Stab_I(\Aut_K(S))\cong S_5$.
Ten of the $120$ elements are just permutations 
of variables: written as elements of $S_{10}$, 
we have besides the identity
\begingroup \tiny
\begin{gather*}
  (10,9,7,4,8,6,3,5,2,1), \qquad
 (5,6,7,1,8,9,2,10,3,4) \qquad
(10,3,6,8,4,7,9,1,2,5), \\
(8,9,2,5,10,3,6,4,7,1) \qquad
(8,6,3,10,5,2,9,1,7,4), \qquad
(4,3,2,1,10,9,7,8,6,5) \\
(1,7,6,5,4,3,2,10,9,8), \qquad
(5,2,9,8,1,7,6,4,3,10) \qquad
(4,7,9,10,1,2,3,5,6,8).   
\end{gather*}
 \endgroup
Then the computation of the Gr\"obner fan of $I$
with \texttt{gfan}~\cite{gfan} using the \texttt{symmetries}
option and the ten listed symmetries was computed instantly on a 5 year
old machine (core2duo), whereas the case of no given symmetries took
about two seconds.
\end{example}

\end{document}